\newtheorem{theorem}{Theorem}
\newtheorem{lemma}[theorem]{Lemma}
\newtheorem{corollary}[theorem]{Corollary}
\theoremstyle{definition}
\newtheorem{definition}[theorem]{Definition}
\begin{document}
\title {Additivity of Jordan Elementary Maps on Rings}
\author{Wu Jing}
\address{Department of Mathematics and Computer Science, Fayetteville State University,  Fayetteville, NC 28301}
 \email{wjing@uncfsu.edu}

\subjclass{16W99; 47B49; 47L10}
\date{May 22, 2007}
\keywords{Jordan elementary maps; rings; prime rings; standard
operator algebras; additivity}
\begin{abstract}
 We prove that Jordan elementary surjective maps on rings are automatically additive. \end{abstract}
\maketitle

Elementary operators  were originally introduced by
Bre$\check{\textrm {s}}$ar and $\check{\textrm{S}}$erml (\cite
{br1115}). In the last decade, elementary maps on operator algebras
as well as on rings attracted more and more attentions. It is very
interesting that  elementary maps and Jordan elementary maps on
some algebras and rings are automatically additive. The aim of this
note is to continue to study the additivity of Jordan elementary
maps on rings and standard operator algebras. We first define Jordan
elementary maps as follows.
\begin{definition}
Let $\mathcal {R}$ and $\mathcal {R}^{\prime}$ be two rings, and let
$M\colon \mathcal {R}\rightarrow \mathcal {R}^{\prime}$ and
$M^*\colon \mathcal {R}^{\prime}\rightarrow \mathcal {R}$ be two
maps.  Call the ordered pair $(M, M^*)$ a \textit {Jordan elementary
map} of $\mathcal {R}\times \mathcal {R}^{\prime}$ if
\begin{displaymath}
 \left\{ \begin{array}{ll}
 M(aM^*(x)+M^*(x)a)=M(a)x+xM(a),\\
 M^*(M(a)x+xM(a))=aM^*(x)+M^*(x)a
\end{array}\right.
\end{displaymath}
 for all $a\in \mathcal {R}$, $x\in \mathcal {R}^{\prime}$.
\end{definition}
Note that the Jordan elementary maps defined above are different
from those in \cite{li237}.

We now  introduce some definitions and results.  Let $\mathcal {R}$
be a ring, if $a\mathcal {R}b=\{ 0\} $ implies either $a=0$ or
$b=0$, then $\mathcal {R}$ is called a \textit{prime} ring. A ring $\mathcal {R}$ is said to be \textit {$2$-torsion free} if $2a=0$ implies $a=0$.

Suppose that $\mathcal {R}$ is a ring containing  a nontrivial
idempotent $e_1$. Let $e_2=1-e_1$ (Note that $\mathcal {R}$ need not
have an identity element).  We set $\mathcal {R}_{ij}=e_i\mathcal
{R}e_j$, for $i, j=1, 2$. Then we may write $\mathcal {R}=\mathcal
{R} _{11}\oplus \mathcal {R} _{12}\oplus \mathcal {R}_{21}\oplus
\mathcal {R} _{22}$. It should be mentioned here that this
significant idea is due to Martinadale (\cite {ma695}) which has become a 
key tool in dealing with the additivity of a large number of maps on some rings and operator 
algebras. In what follows, $a_{ij}$ will denote that  $a_{ij}\in
\mathcal {R}_{ij}$ ($1\leq i, j\leq 2$).

We denote by $B(X)$ the algebra of all linear bounded operators on a
Banach space $X$. A subalgebra of
$B(X)$ is called a \textit {standard operator algebra} if it
contains all finite rank operators in $B(X)$.

Now we are ready to  state our main result of this note.
\begin{theorem}\label{theorem}
Let $\mathcal {R}$ and $\mathcal {R}^{\prime }$ be two rings.
Suppose that $\mathcal {R}$ is a $2$-torsion free ring containing a
nontrivial idempotent $e_1$ and satisfies

(i) $e_iae_j\mathcal {R}e_k=\{ 0\} $, or $e_k\mathcal {R}e_iae_j=\{
0\} $ implies $e_iae_j=0$ ($1\leq i, j, k\leq 2$), where
$e_2=1-e_1$;

(ii) If $e_2ae_2be_2+e_2be_2ae_2=0$ for each $b\in \mathcal {R}$,
then $e_2ae_2=0$.

Suppose that $M\colon {\mathcal R}\rightarrow {\mathcal R}^{\prime
}$ and $M^*\colon {\mathcal R}^{\prime }\rightarrow {\mathcal R}$
are surjective maps such that
\begin{displaymath}
 \left\{ \begin{array}{ll}
 M(aM^*(x)+M^*(x)a)=M(a)x+xM(a),\\
 M^*(M(a)x+xM(a))=aM^*(x)+M^*(x)a
\end{array}\right.
\end{displaymath}
 for all $a\in \mathcal {R}$, $x\in {\mathcal R}^{\prime }$. Then both $M$ and $M^*$ are additive. \end{theorem}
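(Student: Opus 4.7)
My plan is to carry out a Martindale-style Peirce-decomposition argument adapted to the Jordan identities. Throughout I would work with the decomposition $\mathcal{R} = \mathcal{R}_{11} \oplus \mathcal{R}_{12} \oplus \mathcal{R}_{21} \oplus \mathcal{R}_{22}$ relative to $e_1$ and $e_2 = 1-e_1$, and repeatedly exploit surjectivity of $M$ and $M^*$ to produce preimages whose Peirce components can then be pinned down by the two defining identities. It suffices to prove additivity of $M$: for arbitrary $a, b \in \mathcal{R}$, pick $t \in \mathcal{R}$ with $M(t) = M(a) + M(b)$ (available by surjectivity of $M$) and show $t = a+b$. Additivity of $M^*$ then follows by the completely symmetric argument applied to the pair $(M^*, M)$.

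First I would dispatch the normalization $M(0) = M^*(0) = 0$. Plugging $x = 0$ into the second identity gives $M^*(0) = aM^*(0) + M^*(0)a$ for every $a \in \mathcal{R}$. Taking $a = e_1$ and multiplying on both sides by the $e_i$'s, combined with $2$-torsion freeness, kills the diagonal Peirce components of $M^*(0)$; letting $a$ range through $\mathcal{R}_{12}$ and $\mathcal{R}_{21}$ and invoking hypothesis (i) kills the off-diagonal components as well. Then the first identity at $x = 0$ forces $M(0) = 0$.

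The heart of the proof is a sequence of partial-additivity lemmas climbing the Peirce ladder in the usual order: (a) the cross-diagonal sum $M(a_{ii} + b_{jj}) = M(a_{ii}) + M(b_{jj})$ for $i \neq j$; (b) $M(c + b_{ij}) = M(c) + M(b_{ij})$ for $i \neq j$ and arbitrary $c \in \mathcal{R}$; (c) the full Peirce splitting $M\bigl(\sum_{i,j} a_{ij}\bigr) = \sum_{i,j} M(a_{ij})$; and (d) genuine additivity of $M$ on each $\mathcal{R}_{ij}$ separately, handled first for $i \neq j$ by reducing to off-diagonal probes and then for $i = j$ using (c). The mechanism at each stage is uniform: given $u$ and $v$ in prescribed Peirce positions, choose $t$ with $M(t) = M(u) + M(v)$, and for every Peirce-pure probe $s = M^*(x)$ (available because $M^*$ is surjective) compare
\[
M(ts + st) = M(t)x + xM(t) = (M(u)+M(v))x + x(M(u)+M(v)) = M(us+su) + M(vs+sv).
\]
Specializing the Peirce positions of $u$, $v$, and $s$, and feeding the resulting equalities back through both identities, reduces $t = u+v$ to relations of the form $e_i(t-u-v)e_j \cdot \mathcal{R} \cdot e_k = \{0\}$ or symmetric Jordan vanishing inside $\mathcal{R}_{22}$; hypotheses (i) and (ii) respectively then force each Peirce component of $t-u-v$ to vanish.

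The principal obstacle is the Jordan-symmetric shape of the identities: unlike the associative elementary setting, the probe factor cannot be canceled, and every computation produces pairs of symmetric terms that refuse to decouple except in carefully chosen Peirce positions. This is exactly why hypothesis (ii) is imposed — the $\mathcal{R}_{22}$ block is the only Peirce position in which hypothesis (i) alone does not resolve the symmetric product, and (ii) is the minimal Jordan-primeness condition needed there. The individual calculations will be largely Peirce-component bookkeeping; the delicate part is ordering the partial-additivity lemmas so that each is available as a tool for the next.
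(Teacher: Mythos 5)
Your overall strategy --- Martindale's Peirce decomposition, choosing a preimage $t$ of $M(u)+M(v)$ by surjectivity, probing it with Peirce--pure elements $s=M^*(x)$, and climbing a ladder of partial--additivity lemmas ordered so that each feeds the next --- is exactly the route the paper takes, and your displayed identity $M(ts+st)=M(us+su)+M(vs+sv)$ is precisely the paper's key workhorse lemma (stated there in terms of $M^{*^{-1}}$). However, two links in your chain are broken. The more serious one is your closing claim that additivity of $M^*$ ``follows by the completely symmetric argument applied to the pair $(M^*,M)$.'' While $(M^*,M)$ is indeed formally a Jordan elementary map of $\mathcal{R}^{\prime}\times\mathcal{R}$, the hypotheses of the theorem are not symmetric: the nontrivial idempotent, conditions (i) and (ii), and $2$-torsion freeness are all imposed on $\mathcal{R}$ only, and $\mathcal{R}^{\prime}$ is an arbitrary ring with no Peirce decomposition to probe. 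So the ``same argument with the roles swapped'' cannot be run. The correct deduction (and the paper's) stays inside $\mathcal{R}$: set $c=M^*(x)+M^*(y)$ and $d=M^*(x+y)$, use the already--established additivity of $M$ to show $M(t_{ij}c+ct_{ij})=M(t_{ij}d+dt_{ij})$ for every Peirce--pure $t_{ij}\in\mathcal{R}$, and then compare the Peirce components of $c$ and $d$.

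The second gap is that your mechanism silently requires $M$ to be injective. After the right-hand side of your probe identity collapses (via earlier lemmas) to $M(w)$ for a single known $w$, you must cancel $M$ to conclude $ts+st=w$; only surjectivity is assumed, so injectivity has to be proved. The paper devotes a separate lemma to showing that $M$ and $M^*$ are injective (again by Peirce probing, using conditions (i) and (ii)), and only then introduces $M^{-1}$ and $M^{*^{-1}}$; without this step every one of your partial-additivity arguments stalls at an equality of images. A smaller caution: your step (b), $M(c+b_{ij})=M(c)+M(b_{ij})$ for \emph{arbitrary} $c\in\mathcal{R}$, is stronger than what the Jordan-symmetric identities readily yield at that stage; the paper only establishes such sums for $c$ in prescribed Peirce positions (including auxiliary identities like $M(a_{12}+b_{12}c_{22})=M(a_{12})+M(b_{12}c_{22})$, needed to prove additivity on $\mathcal{R}_{12}$ itself) and assembles the full splitting from those, so you should expect to need that finer case analysis rather than a single uniform lemma.
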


The proof of this theorem is organized as a series of lemmas. We
begin with

\begin{lemma} $M(0)=0$ and $M^*(0)=0$.
\end{lemma}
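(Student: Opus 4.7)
The plan is to exploit surjectivity of $M$ and $M^*$ to pick preimages of zero and substitute them into the two defining identities so that everything on the right collapses.

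First, I would use surjectivity of $M$ to pick $a_0\in\mathcal R$ with $M(a_0)=0$, and surjectivity of $M^*$ to pick $b_0\in\mathcal R'$ with $M^*(b_0)=0$. Then in the first identity
\[
M(aM^*(x)+M^*(x)a)=M(a)x+xM(a),
\]
I would set $x=b_0$. The left side becomes $M(0)$, so $M(0)=M(a)b_0+b_0M(a)$ for every $a\in\mathcal R$. Now choose $a=a_0$; the right side collapses to $0$, giving $M(0)=0$.

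For $M^*(0)=0$, I would play the symmetric game on the second identity
\[
M^*(M(a)x+xM(a))=aM^*(x)+M^*(x)a,
\]
setting $a=a_0$ first to get $M^*(0)=a_0M^*(x)+M^*(x)a_0$ for every $x\in\mathcal R'$, and then choosing $x=b_0$ so that the right side vanishes.

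There is essentially no obstacle here: the argument uses only surjectivity of both maps together with the two defining identities, and does not invoke any of the structural hypotheses (2-torsion freeness, the idempotent $e_1$, or conditions (i) and (ii)). The only thing to be careful about is the order of substitutions, so that one plugs a $0$-value on the right-hand side rather than on the left, where it would only tell us about $M(0)$ or $M^*(0)$ and not equate them to $0$.
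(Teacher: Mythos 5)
Your argument is correct, but it takes a different and slightly heavier route than the paper. The paper simply substitutes $a=0$ and $x=0$ into the first defining identity: since $0\cdot M^*(0)+M^*(0)\cdot 0=0$ in the ring, the left side is $M(0)$, while the right side is $M(0)\cdot 0+0\cdot M(0)=0$, giving $M(0)=0$ at once (and symmetrically for $M^*(0)$). No surjectivity is needed. You instead invoke surjectivity of both maps to produce preimages $a_0$ and $b_0$ of $0$ and then collapse the right-hand sides; this works and is carefully ordered (you correctly note that one must arrange for the zero to appear on the side that forces the conclusion), but it uses a hypothesis the statement does not require. The paper's version is preferable here precisely because it isolates this lemma as a consequence of the functional identities alone; your version would fail if one ever wanted this fact for non-surjective Jordan elementary pairs, whereas the direct substitution would still go through.
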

\begin{proof} We have
$M(0)=M(0M^*(0)+M^*(0)0)=M(0)0+0M^*(0)=0$.

Similarly, $M^*(0)=M^*(M(0)0+0M(0))=0M^*(0)+M^*(0)0=0$.
\end{proof}

The following lemma is very useful though the proof is simple.
\begin{lemma}\label{lu}
Let $a=a_{11}+a_{12}+a_{21}+a_{22}\in \mathcal {R}$.

(i) If $a_{ij}t_{jk}=0$ for each $t_{jk}\in \mathcal {R}_{jk} $
($1\leq i, j, k\leq 2$), then $a_{ij}=0$.

Dually, if $t_{ki}a_{ij}=0$ for each  $t_{ki}\in \mathcal {R}_{ki} $
($1\leq i, j, k\leq 2$), then $a_{ij}=0$.

(ii) If $t_{ij}a+at_{ij}\in \mathcal {R}_{ij} $ for every $t_{ij}\in
\mathcal {R}_{ij}$ ($1\leq i\not = j\leq 2$), then $a_{ji}=0$

 (iii) If $a_{ii}t_{ii}+t_{ii}a_{ii}=0$ for every $t_{ii}\in \mathcal {R}_{ii}$ ($i=1, 2$), then $a_{ii}=0$;

(iv) If $t_{jj}a+at_{jj}\in \mathcal{R}_{ij}$ for every $t_{jj}\in
\mathcal {R}_{jj}$ ($1\leq i\not =j\leq j$), then $a_{ji}=0$ and
$a_{jj}=0$.

  Dually, if  $t_{jj}a+at_{jj}\in \mathcal{R}_{ji}$ for every $t_{jj}\in \mathcal {R}_{jj}$ ($1\leq i\not =j\leq j$), then $a_{ij}=0$ and $a_{jj}=0$.
\end{lemma}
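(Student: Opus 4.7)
The plan is to use the Peirce decomposition $a=a_{11}+a_{12}+a_{21}+a_{22}$ throughout and translate each clause of the lemma into a condition to which theorem hypotheses (i) and (ii), or $2$-torsion freeness, can be applied. Part (i) is almost immediate from theorem hypothesis (i): $a_{ij}t_{jk}=0$ for every $t_{jk}\in\mathcal{R}_{jk}$ says $e_ia_{ij}e_j\cdot\mathcal{R}\cdot e_k=0$, which forces $a_{ij}=0$; the dual version is handled by the second form of theorem hypothesis (i).

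For part (ii), I would expand $t_{ij}a+at_{ij}$ into Peirce components: the $\mathcal{R}_{ii}$ piece is $t_{ij}a_{ji}$, the $\mathcal{R}_{jj}$ piece is $a_{ji}t_{ij}$, and the $\mathcal{R}_{ij}$ piece is $t_{ij}a_{jj}+a_{ii}t_{ij}$. The containment in $\mathcal{R}_{ij}$ kills the first two summands; in particular $a_{ji}t_{ij}=0$ for every $t_{ij}\in\mathcal{R}_{ij}$, and part (i) (applied with index shift $i\leftrightarrow j$) yields $a_{ji}=0$. For part (iii), the case $i=1$ is immediate on setting $t_{11}=e_1$, giving $2a_{11}=0$, hence $a_{11}=0$ by $2$-torsion freeness; the case $i=2$ is precisely where theorem hypothesis (ii) is needed, because $e_2$ is not required to lie in $\mathcal{R}$. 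Letting $t_{22}=e_2be_2$ range over $b\in\mathcal{R}$, the assumption becomes $e_2a_{22}e_2be_2+e_2be_2a_{22}e_2=0$ for every $b$, and hypothesis (ii) applies directly. For part (iv), the analogous Peirce expansion of $t_{jj}a+at_{jj}$ yields $\mathcal{R}_{ji}$-piece $t_{jj}a_{ji}$, $\mathcal{R}_{jj}$-piece $t_{jj}a_{jj}+a_{jj}t_{jj}$, and $\mathcal{R}_{ij}$-piece $a_{ij}t_{jj}$. Requiring the sum to lie in $\mathcal{R}_{ij}$ kills the first two: part (iii) then gives $a_{jj}=0$, while $t_{jj}a_{ji}=0$ for all $t_{jj}=e_jbe_j$ is exactly $e_j\mathcal{R}e_j a_{ji}e_i=0$, to which the ``$e_k\mathcal{R}e_iae_j=0$'' clause of theorem hypothesis (i) applies to give $a_{ji}=0$. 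The dual statement of (iv) is obtained by the mirror-image computation.

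The main obstacle is bookkeeping rather than substance: one must keep the four index patterns straight across the cases, and in particular remember that the naive substitution $t_{22}=e_2$ is unavailable in part (iii) because $\mathcal{R}$ need not contain $e_2$, so theorem hypothesis (ii) is the indispensable replacement there.
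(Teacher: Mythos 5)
Your proposal is correct and follows essentially the same route as the paper: isolate Peirce components by multiplying with the idempotents, then invoke hypothesis (i) of the theorem for the off-diagonal pieces, $2$-torsion freeness for the $\mathcal{R}_{11}$ case, and hypothesis (ii) for the $\mathcal{R}_{22}$ case of part (iii). The only cosmetic difference is that in part (ii) you extract $a_{ji}t_{ij}=0$ from the $\mathcal{R}_{jj}$ component whereas the paper extracts $t_{ij}a_{ji}=0$ from the $\mathcal{R}_{ii}$ component; both reduce to part (i).
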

\begin{proof}
(i) It follows from  condition (i) of Theorem \ref{theorem}
directly.

(ii) Since $t_{ij}a+at_{ij}\in \mathcal {R}_{ij}$, we have
$(t_{ij}a+at_{ij})e_i=0$. Thus, $t_{ij}ae_i=0$, i.e.,
$t_{ij}a_{ji}=0$. By (i), we have $a_{ji}=0$.

(iii) For the case of $i=1$, we have $0=a_{11}e_1+e_1a_{11}=a_{11}+a_{11}=2a_{11}$, and so
$a_{11}=0$ since $\mathcal {R}$ is $2$-torsion free.

The case of $i=2$ is the same as  condition (ii) of Theorem \ref
{theorem} as $\mathcal {R}$ is $2$-torsion free.

(iv) From $t_{jj}a+at_{jj}\in \mathcal{R}_{ij}$, we have
$(t_{jj}a+at_{jj})e_i=0$. Then $t_{jj}a_{ji}=0$, and so $a_{ji}=0$.

Again, from $t_{jj}a+at_{jj}\in \mathcal{R}_{ij}$, we have $e_j(t_{jj}a+at_{jj})e_j=0$, i.e., $t_{jj}a_{jj}+a_{jj}t_{jj}=0$. By (iii), we have $a_{jj}=0$.
\end{proof}
\begin{lemma}
$M$ and $M^*$ are injective.
\end{lemma}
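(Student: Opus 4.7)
The plan is to establish injectivity of $M$ first and then of $M^*$, in each case turning the hypothesized equality of images into a full Jordan anti-commutation identity $cr+rc=0$ valid for every $r\in \mathcal{R}$, and then killing $c$ by Peirce decomposing against $r=e_1$ and $r=e_2$ and invoking $2$-torsion freeness together with Lemma \ref{lu}.

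For $M$, I start from $M(a)=M(b)$. Then the right-hand side $M(a)x+xM(a)$ of the first defining equation is unchanged when $a$ is replaced by $b$, so applying $M^*$ and using the second defining equation on each side gives $aM^*(x)+M^*(x)a=bM^*(x)+M^*(x)b$, i.e. $(a-b)M^*(x)+M^*(x)(a-b)=0$ for every $x\in \mathcal{R}'$. Surjectivity of $M^*$ upgrades this to $(a-b)r+r(a-b)=0$ for every $r\in \mathcal{R}$. Writing $c=a-b=c_{11}+c_{12}+c_{21}+c_{22}$ and testing with $r=e_1$ yields $2c_{11}+c_{12}+c_{21}=0$, forcing $c_{11}=c_{12}=c_{21}=0$ by orthogonality of the Peirce summands and 2-torsion freeness; testing with $r=e_2$ then kills $c_{22}$.

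For $M^*$, the symmetric argument starting from $M^*(x)=M^*(y)$ and using equation $1$ instead of equation $2$ produces $M(a)(x-y)+(x-y)M(a)=0$ for every $a\in \mathcal{R}$. Setting $d=x-y\in \mathcal{R}'$, I cannot Peirce-decompose $d$ directly since $\mathcal{R}'$ carries no structural hypothesis. The workaround is to use surjectivity of $M$ to pick $a_0\in \mathcal{R}$ with $M(a_0)=d$, so that $M(a_0)M(a)+M(a)M(a_0)=0$ for every $a$. Feeding this into the second defining equation with $a$ there set to $a_0$ and $x$ there set to $M(a)$ gives $0=M^*(0)=a_0 c_a+c_a a_0$, where $c_a:=M^*(M(a))$. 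The crucial extra input is that the composite $M^*\circ M$ is surjective (both $M$ and $M^*$ are), so $\{c_a:a\in \mathcal{R}\}=\mathcal{R}$; hence $a_0 r+ra_0=0$ for every $r\in \mathcal{R}$, and the Peirce argument used for $M$ gives $a_0=0$, so $d=M(a_0)=M(0)=0$ and $x=y$.

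The main obstacle is the $M^*$ direction: the theorem's hypotheses are asymmetric and give structural control only on $\mathcal{R}$, so the identity $sd+ds=0$ for all $s\in \mathcal{R}'$ cannot be attacked directly inside $\mathcal{R}'$. Recognizing that the anti-commutation must be pulled back to $\mathcal{R}$ through the surjective pair $(M,M^*)$, and that the two surjectivities combine to make $M^*\circ M$ surjective, is what makes the argument go through.
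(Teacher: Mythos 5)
Your reduction of both injectivity claims to an anti-commutation identity over all of $\mathcal{R}$ is sound, and your route to injectivity of $M^*$ is genuinely different from (and shorter than) the paper's: the paper first inverts $M$ and runs a long chain of substitutions to show that $M^{-1}(x)$ and $M^{-1}(y)$ have equal Peirce components, whereas you pull the identity $M(a)d+dM(a)=0$ back to $\mathcal{R}$ by writing $d=M(a_0)$ and exploiting the surjectivity of $M^*\circ M$ together with $M^*(0)=0$. That part of the argument is correct and is a nice shortcut.

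The gap is in the final step, where you propose to kill $c=a-b$ (and later $a_0$) from $cr+rc=0$ by ``testing with $r=e_1$ and $r=e_2$.'' The element $e_2=1-e_1$ need not belong to $\mathcal{R}$: the paper explicitly allows $\mathcal{R}$ to be non-unital, so $e_2ae_2$ and its relatives are only formal notation for $a-e_1a-ae_1+e_1ae_1$, not products with an actual ring element. Testing with $r=e_1$ is legitimate and gives $2c_{11}+c_{12}+c_{21}=0$, hence $c_{11}=c_{12}=c_{21}=0$ by directness of the Peirce decomposition and $2$-torsion freeness; but there is no element $e_2$ to test with, so $c_{22}$ is not disposed of. This is exactly the corner that hypothesis (ii) of Theorem \ref{theorem} is designed for: from $cr+rc=0$ with $r=t_{22}\in\mathcal{R}_{22}$ arbitrary, and using $c_{12}=c_{21}=0$, one gets $c_{22}t_{22}+t_{22}c_{22}=0$ for all $t_{22}$, and then condition (ii) (equivalently Lemma \ref{lu}(iii)) yields $c_{22}=0$. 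With that substitution your argument closes. For comparison, the paper extracts all four components by testing against arbitrary $t_{22}$ and $t_{12}$ and invoking Lemma \ref{lu}, which draws on hypothesis (i) as well; your $e_1$-test is a genuine simplification for three of the four components, but the $\mathcal{R}_{22}$ component cannot be reached without hypothesis (ii).
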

\begin{proof} First, we show that $M$ is injective. Let $a=a_{11}+a_{12}+a_{21}+a_{22}$ and $b=b_{11}+b_{12}+b_{21}+b_{22}$ be two elements of $\mathcal {R}$. Suppose that $M(a)=M(b)$.

For every $t_{ij}\in \mathcal {R}_{ij}$, by surjectivity of $M^*$
there exists $x(i, j)\in \mathcal {R}^{\prime}$ such that $M^*(x(i,
j))=t_{ij}$. We compute
\begin{eqnarray*}
 t_{ij}a+at_{ij}&=&M^*(x(i, j))a+aM^*(x(i, j))=M^*(x(i, j)M(a)+M(a)x(i, j))\\
&=&M^*(x(i, j)M(b)+M(b)x(i, j))=M^*(x(i, j))b+bM^*(x(i, j))\\
&=&t_{ij}b+bt_{ij}.
\end{eqnarray*}
Therefore, we have
\begin{equation}\label{00}
t_{ij}a+at_{ij}=t_{ij}b+bt_{ij}
\end{equation}

Letting $i=j=2$ in the above equality, we have
$$t_{22}a_{21}+t_{22}a_{22}+a_{12}t_{22}+a_{22}t_{22}=t_{22}b_{21}+t_{22}b_{22}+b_{12}t_{22}+b_{22}t_{22}.$$
 This implies that $t_{22}a_{21}=t_{22}b_{21}$, $a_{12}t_{22}=b_{12}t_{22}$, and $t_{22}a_{22}+a_{22}t_{22}=t_{22}b_{22}+b_{22}t_{22}$. By Lemma \ref{lu}, we get $a_{21}=b_{21}$, $a_{12}=b_{12}$ and $a_{22}=b_{22}$.

If $i=1$ and $j=2$, then equality (\ref{00}) becomes
$$t_{12}a_{21}+t_{12}a_{22}+a_{11}t_{12}+a_{21}t_{12}=t_{12}b_{21}+t_{12}b_{22}+b_{11}t_{12}+b_{21}t_{12},$$
and so $a_{11}t_{12}=b_{11}t_{12}$.Thus $a_{11}=b_{11}$.
  Therefore we can infer that $M$ is injective.

We now show that $M^*$ is injective. Let $x, y\in \mathcal
{R}^{\prime}$ such that $M^*(x)=M^*(y)$. Since $M$ is a bijection,
we may pick $a, b\in \mathcal {R}$ such that $a=M^{-1}(x)$ and
$b=M^{-1}(y)$. We write $a=a_{11}+a_{12}+a_{21}+a_{22}$ and
$b=b_{11}+b_{12}+b_{21}+b_{22}$.

For each $t_{ij}\in \mathcal {R}_{ij}$, by the surjectivity of
$M^*M$, there is a $c(i, j)\in \mathcal {R}$ such that $M^*M(c(i,
j))=t_{ij}$.

We consider
\begin{eqnarray*}
& &t_{ij}a+at_{ij}\\
&=&t_{ij}M^{-1}(x)+M^{-1}(x)t_{ij}\\
&=&M^*M(c(i, j))M^{-1}(x)+M^{-1}(x)M^*M(c(i, j))\\
&=&M^*(M(c(i, j))MM^{-1}(x)+MM^{-1}(x)M(c(i, j)))\\
&=&M^*(M(c(i, j))x+xM(c(i, j)))=c(i, j)M^*(x)+M^*(x)c(i, j)\\
&=&c(i, j)M^*(y)+M^*(y)c(i, j)=M^*(M(c(i,j))y+yM(c(i, j)))\\
&=&M^*(M(c(i, j))MM^{-1}(y)+MM^{-1}(y)M(c(i, j)))\\
&=&M^*M(c(i, j))M^{-1}(y)+M^{-1}(y)M^*M(c(i,j))\\
&=&t_{ij}M^{-1}(y)+M^{-1}(y)t_{ij}\\
&=&t_{ij}b+bt_{ij},
\end{eqnarray*}
i.e., $t_{ij}a+at_{ij}=t_{ij}b+bt_{ij}$.

With the same argument above we can get  $a_{11}=b_{11},
a_{12}=b_{12}$,  $a_{21}=b_{21}$, and $a_{22}=b_{22}$. Hence $a=b$,
equivalently, $x=y$, which completes the proof.
\end{proof}
From the above lemma we see that  both $M$ and $M^{*^{-1}}$ are
bijective.
\begin{lemma}\label{inverse}
The pair $(M^{*^{-1}}, M^{-1})$ is a Jordan elementary map on
$\mathcal {R}\times \mathcal {R}^{\prime}$. That is,
\begin{displaymath}
 \left\{ \begin{array}{ll}
 M^{*^{-1}}(aM^{-1}(x)+M^{-1}(x)a)=M^{*^{-1}}(a)x+xM^{*^{-1}}(a),\\
 M^{-1}(M^{*^{-1}}(a)x+xM^{*^{-1}}(a))=aM^{-1}(x)+M^{-1}(x)a
\end{array}\right.
\end{displaymath}
for all $a\in \mathcal {R}$, $x\in \mathcal {R}^{\prime }$.
\end{lemma}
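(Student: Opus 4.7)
The plan is to derive both identities by direct algebraic manipulation of the two defining equations for $(M, M^*)$, using only the bijectivity of $M$ and $M^*$ established in the previous lemma. Nothing from the idempotent decomposition or the ring-theoretic conditions (i)--(ii) should be required here; the lemma is essentially a formal consequence of inverting the original identities on both sides.

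For the second identity in the conclusion, I would start from
$$M(aM^*(x) + M^*(x)a) = M(a)x + xM(a),$$
apply $M^{-1}$ to both sides, and then substitute $a = M^{-1}(u)$ for an arbitrary $u \in \mathcal{R}'$ and $x = M^{*^{-1}}(v)$ for an arbitrary $v \in \mathcal{R}$. Since $M(a) = u$ and $M^*(x) = v$, the identity rewrites as
$$M^{-1}\bigl(uM^{*^{-1}}(v) + M^{*^{-1}}(v)u\bigr) = M^{-1}(u)\,v + v\,M^{-1}(u),$$
which, after relabeling $u$ as $x$ and $v$ as $a$, is precisely the second equation required of $(M^{*^{-1}}, M^{-1})$. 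Symmetrically, starting from the second original equation $M^*(M(a)x + xM(a)) = aM^*(x) + M^*(x)a$, applying $M^{*^{-1}}$ to both sides, and performing the analogous substitution, I obtain
$$M^{*^{-1}}\bigl(M^{-1}(u)\,v + v\,M^{-1}(u)\bigr) = u\,M^{*^{-1}}(v) + M^{*^{-1}}(v)\,u,$$
which after relabeling is the first required identity.

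I do not foresee any real obstacle. The only point deserving care is that the substitutions be type-correct: $u$ must range over $\mathcal{R}'$ and $v$ over $\mathcal{R}$, so that $M^{-1}(u) \in \mathcal{R}$ and $M^{*^{-1}}(v) \in \mathcal{R}'$ and all the products occurring are well-defined in the appropriate ring. Bijectivity of both $M$ and $M^*$ guarantees that as $u,v$ vary freely, the substituted pairs $(a,x) = (M^{-1}(u), M^{*^{-1}}(v))$ sweep out all of $\mathcal{R} \times \mathcal{R}'$, so the derived identities hold universally. This is enough to conclude that $(M^{*^{-1}}, M^{-1})$ is a Jordan elementary map of $\mathcal{R} \times \mathcal{R}'$.
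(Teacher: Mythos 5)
Your proposal is correct and is essentially the paper's own argument: the paper likewise substitutes $x = MM^{-1}(x)$ and $a = M^*M^{*^{-1}}(a)$ into the original identities and then applies the inverse map to both sides, relying only on the bijectivity established in the preceding lemma. The type-checking remark and the observation that conditions (i)--(ii) play no role here are accurate.
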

\begin{proof}
We consider
\begin{eqnarray*}
M^*(M^{*^{-1}}(a)x+xM^{*^{-1}}(a))&=&M^*(M^{*^{-1}}(a)MM^{-1}(x)+MM^{-1}(x)M^{*^{-1}}(a))\\
&=&aM^{-1}(x)+M^{-1}(x)a,
\end{eqnarray*}
which leads to the first equality. The second one goes similarly.
\end{proof}

The following result will be used frequently in this note.
\begin{lemma}\label{add}
Let $a, b, c\in \mathcal {R}$ such that $M(c)=M(a)+M(b)$. Then
$$M^{*^{-1}}(tc+ct)=M^{*^{-1}}(ta+at)+M^{*^{-1}}(tb+bt)$$
for all $t\in \mathcal {R}$
\end{lemma}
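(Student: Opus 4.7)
The plan is to extract a one-line transfer identity from the second defining equation of the Jordan elementary map and then read off the conclusion from the hypothesis $M(c)=M(a)+M(b)$ by expanding and splitting the right-hand side.

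More precisely, I would start from the second defining equation
\[
M^{*}\bigl(M(a)x+xM(a)\bigr)=aM^{*}(x)+M^{*}(x)a,\qquad a\in\mathcal{R},\ x\in\mathcal{R}',
\]
and apply $M^{*^{-1}}$ (which is a bijection by the preceding lemma). Writing a generic $t\in\mathcal{R}$ as $t=M^{*}(x)$, i.e.\ setting $x=M^{*^{-1}}(t)$, this rearranges to the master identity
\[
M^{*^{-1}}(dt+td)=M(d)\,M^{*^{-1}}(t)+M^{*^{-1}}(t)\,M(d)\qquad(d,t\in\mathcal{R}).
\]
This is the only substantive step; everything else is bookkeeping.

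Next I would apply the master identity with $d=c$ and use the hypothesis $M(c)=M(a)+M(b)$ on the right:
\[
M^{*^{-1}}(tc+ct)=\bigl(M(a)+M(b)\bigr)M^{*^{-1}}(t)+M^{*^{-1}}(t)\bigl(M(a)+M(b)\bigr).
\]
Distributing (which is legal because we are inside the ring $\mathcal{R}$, where left and right multiplication are additive, even though $M$ and $M^{*^{-1}}$ are not yet known to be), I would then recognize the two resulting pieces, via the master identity applied with $d=a$ and $d=b$ respectively, as $M^{*^{-1}}(at+ta)$ and $M^{*^{-1}}(bt+tb)$, which is the claim.

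There is no real obstacle here: the lemma is essentially a tautological consequence of the second Jordan elementary equation once one knows that $M^{*}$ is bijective, which was the content of the previous two lemmas. The only thing to watch is that one uses the biadditivity of multiplication in $\mathcal{R}$ (not any presumed additivity of $M$ or $M^{*^{-1}}$) when distributing across the sum $M(a)+M(b)$.
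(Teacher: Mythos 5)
Your proposal is correct and follows essentially the same route as the paper: the ``master identity'' $M^{*^{-1}}(dt+td)=M(d)M^{*^{-1}}(t)+M^{*^{-1}}(t)M(d)$ you extract from the second defining equation is exactly the first identity of the paper's Lemma \ref{inverse} (applied with $x=M(d)$), and the remaining steps --- substitute $M(c)=M(a)+M(b)$, distribute using biadditivity of ring multiplication, and recombine --- match the paper's computation line for line. The only cosmetic difference is that you re-derive the needed special case inline rather than citing Lemma \ref{inverse}.
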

\begin{proof} For every $t\in \mathcal {R}$, applying Lemma \ref{inverse}, we have
\begin{eqnarray*}
M^{*^{-1}}(tc+ct)&=&M^{*^{-1}}(tM^{-1}M(c)+M^{-1}M(c)t)=M^{*^{-1}}(t)M(c)+M(c)M^{*^{-1}}(t)\\
&=&M^{*^{-1}}(t)(M(a)+M(b))+(M(a)+M(b))M^{*^{-1}}(t)\\
&=&(M^{*^{-1}}(t)M(a)+M(a)M^{*^{-1}}(t))+(M^{*^{-1}}(t)M(b)+M(b)M^{*^{-1}}(t))\\
&=&M^{*^{-1}}(ta+at)+M^{*^{-1}}(tb+bt).
\end{eqnarray*}
\end{proof}

\begin{lemma}\label{lemmaiiij}
Let $a_{ii}\in \mathcal {R}_{ii}$ and $b_{ij}\in \mathcal {R}_{ij}$,
$1\leq i\not =j\leq 2$, then

(i) $M(a_{ii}+b_{ij})=M(a_{ii})+M(b_{ij})$;

(ii)
$M^{*^{-1}}(a_{ii}+b_{ij})=M^{*^{-1}}(a_{ii})+M^{*^{-1}}(b_{ij})$.
\end{lemma}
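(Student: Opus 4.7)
The plan is to combine the bijectivity established above with Lemma~\ref{add}, probing the putative ``sum'' by well-chosen test elements $t\in\mathcal{R}$ to pin down the Peirce components of the unknown one by one.

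For part (i), I will use surjectivity of $M$ to pick $c\in\mathcal{R}$ with $M(c)=M(a_{ii})+M(b_{ij})$ and write $c=c_{11}+c_{12}+c_{21}+c_{22}$; the goal is to show $c=a_{ii}+b_{ij}$. First I will apply Lemma~\ref{add} with $t=e_j$: the right-hand side collapses because $e_ja_{ii}+a_{ii}e_j=0$ and $e_jb_{ij}+b_{ij}e_j=b_{ij}$, giving $M^{*^{-1}}(e_jc+ce_j)=M^{*^{-1}}(b_{ij})$. Injectivity of $M^{*^{-1}}$ then yields $e_jc+ce_j=b_{ij}$; expanding the left-hand side in Peirce components produces $c_{ij}+c_{ji}+2c_{jj}=b_{ij}$, and matching components together with the $2$-torsion freeness of $\mathcal{R}$ forces $c_{ij}=b_{ij}$, $c_{ji}=0$, and $c_{jj}=0$.

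Next I will apply Lemma~\ref{add} with an arbitrary $t_{ij}\in\mathcal{R}_{ij}$. Here $t_{ij}a_{ii}+a_{ii}t_{ij}=a_{ii}t_{ij}$ and $t_{ij}b_{ij}+b_{ij}t_{ij}=0$, so injectivity of $M^{*^{-1}}$ gives $t_{ij}c+ct_{ij}=a_{ii}t_{ij}$. Substituting the already-determined Peirce components of $c$ collapses the left-hand side to $c_{ii}t_{ij}$, so $(c_{ii}-a_{ii})t_{ij}=0$ for every $t_{ij}\in\mathcal{R}_{ij}$. Lemma~\ref{lu}(i) then delivers $c_{ii}=a_{ii}$, so $c=a_{ii}+b_{ij}$ and (i) follows.

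Part (ii) will be handled by the same strategy applied to the pair $(M^{*^{-1}},M^{-1})$, which is Jordan elementary by Lemma~\ref{inverse}. The analogue of Lemma~\ref{add} for this pair, proved by the identical computation, reads: if $M^{*^{-1}}(c)=M^{*^{-1}}(a)+M^{*^{-1}}(b)$, then $M(tc+ct)=M(ta+at)+M(tb+bt)$ for every $t\in\mathcal{R}$. Running the same two probes $t=e_j$ and $t=t_{ij}$ and invoking injectivity of $M$ in place of $M^{*^{-1}}$ yields (ii). I do not expect a conceptual obstacle: the only real care required is in the Peirce bookkeeping, and the $2$-torsion-freeness hypothesis is used exactly once, to eliminate the coefficient $2c_{jj}$ in the $e_j$-probe.
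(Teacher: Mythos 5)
Your overall strategy --- pick a preimage $c$ of $M(a_{ii})+M(b_{ij})$, feed it through Lemma \ref{add}, cancel $M^{*^{-1}}$ by injectivity, and read off the Peirce components of $c$ via Lemma \ref{lu} --- is exactly the paper's, and your second probe (arbitrary $t_{ij}\in\mathcal{R}_{ij}$) as well as your treatment of (ii) via the pair $(M^{*^{-1}},M^{-1})$ are sound. The gap is in your first probe: you apply Lemma \ref{add} with $t=e_j$, but that lemma requires $t\in\mathcal{R}$, and when $j=2$ the symbol $e_2=1-e_1$ is only formal --- the paper explicitly allows $\mathcal{R}$ to have no identity, so $e_2$ need not be an element of $\mathcal{R}$. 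Hence for the case $(i,j)=(1,2)$, i.e.\ $M(a_{11}+b_{12})$, your argument as written does not apply. (It is a correct proof when $\mathcal{R}$ is unital; in that case hypothesis (ii) of Theorem \ref{theorem} also reduces to $2$-torsion-freeness, which is consistent with your closing remark that $2$-torsion-freeness is the only extra input you need.)

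The repair is what the paper does: replace the single probe $t=e_j$ by the family of probes $t_{jj}\in\mathcal{R}_{jj}$. Lemma \ref{add} gives $t_{jj}c+ct_{jj}=b_{ij}t_{jj}\in\mathcal{R}_{ij}$ for every $t_{jj}$, and Lemma \ref{lu}(iv) then yields $c_{ji}=0$ and $c_{jj}=0$; comparing $\mathcal{R}_{ij}$-components gives $c_{ij}t_{jj}=b_{ij}t_{jj}$, hence $c_{ij}=b_{ij}$ by Lemma \ref{lu}(i). Note that killing $c_{jj}$ for $j=2$ this way genuinely uses hypothesis (ii) of Theorem \ref{theorem} (through Lemma \ref{lu}(iii) and (iv)), not merely $2$-torsion-freeness; this is precisely the step your version bypasses by assuming $e_2$ is available as a test element. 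Once $c_{ij}$, $c_{ji}$, $c_{jj}$ are identified in this corrected way, the remainder of your proof (the $t_{ij}$-probe forcing $c_{ii}=a_{ii}$, and the symmetric argument for part (ii)) goes through unchanged.
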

\begin{proof}
Suppose that $M(c)=M(a_{ii})+M(b_{ij})$ for some $c\in \mathcal
{R}$. For arbitrary $t_{ij}\in \mathcal {R}_{ij}$, by Lemma
\ref{add}, we have
$$M^{*^{-1}}(t_{ij}c+ct_{ij})=M^{*^{-1}}(t_{ij}a_{ii}+a_{ii}t_{ij})+M^{*^{-1}}(t_{ij}b_{ij}+b_{ij}t_{ij})=M^{*^{-1}}(a_{ii}t_{ij}).$$
 It follows that $t_{ij}c+ct_{ij}=a_{ii}t_{ij}$. By Lemma \ref{lu}, we have $c_{ji}=0$.

 Note that $t_{ij}c+ct_{ij}=t_{ij}c_{ji}+t_{ij}c_{jj}+c_{ji}t_{ij}+c_{ii}t_{ij}=t_{ij}c_{jj}+c_{ii}t_{ij}$. Therefore we have
 \begin{equation}\label{aa}
 t_{ij}c_{jj}+c_{ii}t_{ij}=a_{ii}t_{ij}.
 \end{equation}

 Now for any $t_{jj}\in \mathcal {R}_{jj}$, using Lemma \ref{add}, we have
 $$M^{*^{-1}}(t_{jj}c+ct_{jj})=M^{*^{-1}}(t_{jj}a_{ii}+a_{ii}t_{jj})+M^{*^{-1}}(t_{jj}b_{ij}+b_{ij}t_{jj})=M^{*^{-1}}(b_{ij}t_{jj}),$$
 which yields that $t_{jj}c+ct_{jj}=b_{ij}t_{jj}$. It follows from  Lemma \ref{lu} that $c_{jj}=0$. Moreover, equation (\ref{aa}) turns to be $c_{ii}t_{ij}=a_{ii}t_{ij}$, and so $c_{ii}=a_{ii}$.

 Notice that $b_{ij}t_{jj}=t_{jj}c+ct_{jj}=t_{jj}c_{ji}+t_{jj}c_{jj}+c_{ij}t_{jj}+c_{jj}t_{jj}=c_{ij}t_{jj}$. Using  Lemma \ref{lu} we see that $c_{ij}=b_{ij}$ ($i\not =j$). Therefore $c=c_{ii}+c_{ij}+c_{ji}+c_{jj}=a_{ii}+b_{ij}$. Hence
 $M(a_{ii}+b_{ij})=M(a_{ii})+M(b_{ij})$.

 By Lemma \ref{inverse} we can infer that (ii) holds.
\end{proof}

Similarly, we can get the following result.
\begin{lemma}\label{lemmaiiji}
Let $a_{ii}\in \mathcal {R}_{ii}$ and $b_{ji}\in \mathcal {R}_{ji}$,
$1\leq i\not =j\leq 2$, then

(i) $M(a_{ii}+b_{ji})=M(a_{ii})+M(b_{ji})$;

(ii)
$M^{*^{-1}}(a_{ii}+b_{ji})=M^{*^{-1}}(a_{ii})+M^{*^{-1}}(b_{ji})$.
\end{lemma}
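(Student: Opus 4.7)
The plan is to mirror the proof of Lemma \ref{lemmaiiij} with the roles of $\mathcal{R}_{ij}$ and $\mathcal{R}_{ji}$ systematically interchanged. Choose $c \in \mathcal{R}$ with $M(c) = M(a_{ii}) + M(b_{ji})$; the task reduces to proving $c = a_{ii} + b_{ji}$, after which (i) is immediate and (ii) follows by applying (i) to the pair $(M^{*^{-1}}, M^{-1})$, which is again Jordan elementary by Lemma \ref{inverse}.

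First I would test against an arbitrary $t_{ji} \in \mathcal{R}_{ji}$ via Lemma \ref{add}: the term $M^{*^{-1}}(t_{ji} b_{ji} + b_{ji} t_{ji})$ vanishes because $\mathcal{R}_{ji} \mathcal{R}_{ji} = 0$, so I obtain $t_{ji} c + c t_{ji} = t_{ji} a_{ii}$. Since the right-hand side lies in $\mathcal{R}_{ji}$, the variant of Lemma \ref{lu}(ii) obtained by swapping $i$ and $j$ forces $c_{ij} = 0$. Comparing Peirce components of $t_{ji} c + c t_{ji} = t_{ji} a_{ii}$ then leaves a single nontrivial identity, namely
\begin{equation*}
t_{ji} c_{ii} + c_{jj} t_{ji} = t_{ji} a_{ii}.
\end{equation*}

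Next I would test against an arbitrary $t_{jj} \in \mathcal{R}_{jj}$: the $a_{ii}$-contributions cancel (since $a_{ii} t_{jj} = t_{jj} a_{ii} = 0$), so Lemma \ref{add} gives $t_{jj} c + c t_{jj} = t_{jj} b_{ji}$. The $(j,j)$-Peirce part yields $t_{jj} c_{jj} + c_{jj} t_{jj} = 0$, hence $c_{jj} = 0$ by Lemma \ref{lu}(iii); the $(j,i)$-Peirce part gives $t_{jj}(c_{ji} - b_{ji}) = 0$, hence $c_{ji} = b_{ji}$ by the dual half of Lemma \ref{lu}(i). Feeding $c_{jj} = 0$ back into the leftover identity from the previous step collapses it to $t_{ji}(c_{ii} - a_{ii}) = 0$, and one more invocation of Lemma \ref{lu}(i) delivers $c_{ii} = a_{ii}$. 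Assembling the four Peirce pieces, $c = a_{ii} + b_{ji}$, which establishes (i); then (ii) follows from Lemma \ref{inverse} as noted above.

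The main obstacle is purely bookkeeping: under the $i \leftrightarrow j$ swap one must carefully select the correct Peirce pieces of $t c + c t$ and invoke the matching variant of Lemma \ref{lu}. No idea beyond those already used for Lemma \ref{lemmaiiij} is needed.
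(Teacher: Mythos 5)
Your proposal is correct and is exactly the index-swapped rerun of the proof of Lemma \ref{lemmaiiij} that the paper intends when it says ``Similarly, we can get the following result'': the same two test families ($t_{ji}$ and $t_{jj}$), the same use of Lemma \ref{add}, and the same Peirce-component bookkeeping via Lemma \ref{lu}, with (ii) deduced from Lemma \ref{inverse}. No gaps.
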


\begin{lemma} \label{lemma121222}(i) $M(a_{12}+b_{12}c_{22})=M(a_{12})+M(b_{12}c_{22})$;

(ii)
$M^{*^{-1}}(a_{12}+b_{12}c_{22})=M^{*^{-1}}(a_{12})+M^{*^{-1}}(b_{12}c_{22})$;

(iii)  $M(a_{21}+b_{22}c_{21})=M(a_{21})+M(b_{22}c_{21})$;

(iv)
$M^{*^{-1}}(a_{21}+b_{22}c_{21})=M^{*^{-1}}(a_{21})+M^{*^{-1}}(b_{22}c_{21})$.
\end{lemma}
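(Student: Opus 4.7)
The plan is to realise each target sum as a single Jordan bracket $aM^*(x)+M^*(x)a$ with $a$ and $M^*(x)$ chosen so that Lemmas \ref{lemmaiiij} and \ref{lemmaiiji} already furnish the splittings of $M(a)$ and $M^{*^{-1}}(M^*(x))$ that we will need. For part (i), I would take $a:=e_1+b_{12}$ and, by surjectivity of $M^*$, pick $x\in\mathcal{R}'$ with $M^*(x)=a_{12}+c_{22}$. A direct Peirce-component check gives
\[
(e_1+b_{12})(a_{12}+c_{22})=a_{12}+b_{12}c_{22},\qquad (a_{12}+c_{22})(e_1+b_{12})=0,
\]
so the defining Jordan identity yields $M(a_{12}+b_{12}c_{22})=M(e_1+b_{12})x+xM(e_1+b_{12})$.

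Next I would split $M(e_1+b_{12})=M(e_1)+M(b_{12})$ by Lemma \ref{lemmaiiij}(i) and $x=M^{*^{-1}}(a_{12})+M^{*^{-1}}(c_{22})$ by Lemma \ref{lemmaiiji}(ii). Writing $\tilde{x}:=M^{*^{-1}}(a_{12})$, $\tilde{y}:=M^{*^{-1}}(c_{22})$ and expanding, the right-hand side breaks into four Jordan brackets, each of which the defining identity collapses into a single value of $M$: we obtain $M(e_1)\tilde{x}+\tilde{x}M(e_1)=M(e_1a_{12}+a_{12}e_1)=M(a_{12})$ and $M(b_{12})\tilde{y}+\tilde{y}M(b_{12})=M(b_{12}c_{22}+c_{22}b_{12})=M(b_{12}c_{22})$, while the two cross-terms $M(e_1)\tilde{y}+\tilde{y}M(e_1)$ and $M(b_{12})\tilde{x}+\tilde{x}M(b_{12})$ vanish because $e_1c_{22}+c_{22}e_1=0$ and $b_{12}a_{12}+a_{12}b_{12}=0$ by Peirce orthogonality. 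Summing the four pieces gives exactly $M(a_{12})+M(b_{12}c_{22})$.

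For (iii) I would run the mirror argument with $a:=a_{21}+b_{22}$ and $M^*(x):=e_1+c_{21}$: one checks $(a_{21}+b_{22})(e_1+c_{21})=a_{21}+b_{22}c_{21}$ and $(e_1+c_{21})(a_{21}+b_{22})=0$. Lemma \ref{lemmaiiij}(i) splits $M(a_{21}+b_{22})$ (read as $M(a_{22}+b_{21})$ with $a_{22}=b_{22}$, $b_{21}=a_{21}$) and Lemma \ref{lemmaiiji}(ii) splits $M^{*^{-1}}(e_1+c_{21})$; the same four-term expansion then collapses to $M(a_{21})+M(b_{22}c_{21})$. Parts (ii) and (iv) follow by applying (i) and (iii) verbatim to the Jordan elementary pair $(M^{*^{-1}},M^{-1})$ provided by Lemma \ref{inverse}.

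The only real content is spotting the factorisations $(e_1+b_{12})(a_{12}+c_{22})$ and $(a_{21}+b_{22})(e_1+c_{21})$; both have the crucial property that exactly one of $aM^*(x)$ and $M^*(x)a$ vanishes while the other reproduces the target sum, and that the two cross-terms produced by the ensuing expansion are themselves Jordan brackets of Peirce-orthogonal elements. Once these factorisations are in place the rest is mechanical Peirce bookkeeping together with the already-established Lemmas \ref{lemmaiiij} and \ref{lemmaiiji}.
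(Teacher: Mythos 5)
Your proposal is correct and follows essentially the same route as the paper: the identical key factorizations $a_{12}+b_{12}c_{22}=(e_1+b_{12})(a_{12}+c_{22})+(a_{12}+c_{22})(e_1+b_{12})$ and $a_{21}+b_{22}c_{21}=(e_1+c_{21})(a_{21}+b_{22})+(a_{21}+b_{22})(e_1+c_{21})$, combined with the splittings from Lemmas \ref{lemmaiiij} and \ref{lemmaiiji}, the defining Jordan identity, and Lemma \ref{inverse} for parts (ii) and (iv). The only cosmetic difference is that you also split $M(e_1+b_{12})$ (resp.\ $M(a_{21}+b_{22})$) and evaluate four Jordan brackets, two of which vanish, whereas the paper keeps that factor whole, splits only the $M^{*^{-1}}$ factor, and evaluates two brackets; the computations are otherwise the same.
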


\begin{proof}
  Note that $a_{12}+b_{12}c_{22}=(e_1+b_{12})(a_{12}+c_{22})+(a_{12}+c_{22})(e_1+b_{12})$. We now compute
\begin{eqnarray*}
& &M(a_{12}+b_{12}c_{22})\\
&=&M((e_1+b_{12})(a_{12}+c_{22})+(a_{12}+c_{22})(e_1+b_{12}))\\
&=&M((e_1+b_{12})M^*M^{*^{-1}}(a_{12}+c_{22})+M^*M^{*^{-1}}(a_{12}+c_{22})(e_1+b_{12}))\\
&=&M(e_1+b_{12})M^{*^{-1}}(a_{12}+c_{22})+M^{*^{-1}}(a_{12}+c_{22})M(e_1+b_{12})\\
&=&M(e_1+b_{12})M^{*^{-1}}(a_{12})+M(e_1+b_{12})M^{*^{-1}}(c_{22})\\
& &+M^{*^{-1}}(a_{12})M(e_1+b_{12})+M^{*^{-1}}(c_{22})M(e_1+b_{12})\\
&=&M((e_1+b_{12})a_{12}+a_{12}(e_1+b_{12}))+M((e_1+b_{12})c_{22}+c_{22}(e_1+b_{12}))\\
&=&M(a_{12})+M(b_{12}c_{22}).
\end{eqnarray*}
  Similarly, we can get  $M(a_{21}+b_{22}c_{21})=M(a_{21})+M(b_{22}c_{21})$ from the fact that $a_{21}+b_{22}c_{21}=(e_1+c_{21})(a_{21}+b_{22})+(a_{21}+b_{22})(e_1+c_{21})$.

(ii) and (iv) follow from (i) and (iii) respectively by Lemma
\ref{inverse}.
 \end{proof}

 \begin{lemma} \label{lemma12}The following are true.

 (i) $M(a_{12}+b_{12})=M(a_{12})+M(b_{12})$;

(ii)
$M^{*^{-1}}(a_{12}+b_{12})=M^{*^{-1}}(a_{12})+M^{*^{-1}}(b_{12})$.
 \end{lemma}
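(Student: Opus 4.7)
The plan is to continue the pattern of Lemmas~\ref{lemmaiiij} and~\ref{lemmaiiji}. Pick $c\in\mathcal{R}$ with $M(c)=M(a_{12})+M(b_{12})$ and decompose $c=c_{11}+c_{12}+c_{21}+c_{22}$. The goal is to force each Peirce component of $c$ to match the corresponding component of $a_{12}+b_{12}$ by hitting $c$ with well-chosen multipliers $t$ and feeding the hypothesis $M(c)=M(a_{12})+M(b_{12})$ through Lemma~\ref{add}. Part (ii) will then follow from (i) via Lemma~\ref{inverse}, since $(M^{*^{-1}},M^{-1})$ is itself a Jordan elementary map.

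The main step uses $t=t_{22}\in\mathcal{R}_{22}$. Because $t_{22}a_{12}=t_{22}b_{12}=0$, Lemma~\ref{add} collapses to
\[
M^{*^{-1}}(t_{22}c+ct_{22})=M^{*^{-1}}(a_{12}t_{22})+M^{*^{-1}}(b_{12}t_{22}).
\]
Reading $a_{12}t_{22}$ as an element of $\mathcal{R}_{12}$ and $b_{12}t_{22}$ in the ``$b_{12}c_{22}$'' shape of Lemma~\ref{lemma121222}(ii), the right-hand side collapses to $M^{*^{-1}}\bigl((a_{12}+b_{12})t_{22}\bigr)$. Injectivity of $M^{*^{-1}}$ then yields $t_{22}c+ct_{22}=(a_{12}+b_{12})t_{22}$. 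Expanding the left-hand side as $t_{22}c_{21}+c_{12}t_{22}+(t_{22}c_{22}+c_{22}t_{22})$ and comparing Peirce components against the right-hand side, which lies in $\mathcal{R}_{12}$, Lemma~\ref{lu} forces $c_{21}=0$, $c_{22}=0$, and $c_{12}=a_{12}+b_{12}$.

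It remains to extract $c_{11}=0$. Taking $t=t_{12}\in\mathcal{R}_{12}$ in Lemma~\ref{add}, every cross product $t_{12}a_{12}$, $a_{12}t_{12}$, $t_{12}b_{12}$, $b_{12}t_{12}$ vanishes (two $\mathcal{R}_{12}$ factors annihilate one another), so $M^{*^{-1}}(t_{12}c+ct_{12})=0$ and hence $t_{12}c+ct_{12}=0$. Substituting the already-known values $c_{21}=c_{22}=0$ and $c_{12}=a_{12}+b_{12}$, the expression $t_{12}c+ct_{12}$ collapses to $c_{11}t_{12}$; the relation $c_{11}t_{12}=0$ for every $t_{12}\in\mathcal{R}_{12}$ then yields $c_{11}=0$ by Lemma~\ref{lu}(i). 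Therefore $c=a_{12}+b_{12}$, which proves (i), and (ii) is immediate from Lemma~\ref{inverse}.

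The only subtle point is the first step: one must recognize the pair $(a_{12}t_{22},\,b_{12}t_{22})$ as fitting exactly the ``$a_{12}+b_{12}c_{22}$'' template of Lemma~\ref{lemma121222}(ii); without this observation there is no way to reassemble the two separate $M^{*^{-1}}$-images into $M^{*^{-1}}$ of a single element. Once that linearization is invoked, the rest is a routine Peirce-component chase using Lemma~\ref{lu}.
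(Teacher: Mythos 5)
Your proposal is correct and follows essentially the same route as the paper: the $t_{22}$ multiplier combined with Lemma~\ref{add} and the recombination via Lemma~\ref{lemma121222} to get $c_{21}=c_{22}=0$ and $c_{12}=a_{12}+b_{12}$, then the $t_{12}$ multiplier to kill $c_{11}$, with (ii) handled through Lemma~\ref{inverse}. You even isolate the same key point the paper relies on, namely recognizing $a_{12}t_{22}+b_{12}t_{22}$ as an instance of the $a_{12}+b_{12}c_{22}$ pattern.
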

 \begin{proof}
We only show (i). Suppose that $c=c_{11}+c_{12}+c_{21}+c_{22}\in
\mathcal {R}$ satisfies $M(c)=M(a_{12})+M(b_{12})$. For any
$t_{22}\in \mathcal {R}_{22}$, we have
\begin{eqnarray*}
M^{*^{-1}}(t_{22}c+ct_{22})&=&M^{*^{-1}}(t_{22}a_{12}+a_{12}t_{22})+M^{*^{-1}}(t_{22}b_{12}+b_{12}t_{22})\\
&=&M^{*^{-1}}(a_{12}t_{22})+M^{*^{-1}}(b_{12}t_{22})\\
&=&M^{*^{-1}}(a_{12}t_{22}+b_{12}t_{22}).
\end{eqnarray*}

 Note that   we apply Lemma \ref{add} in the first equality and Lemma \ref{lemma121222} in the last equality.

 Therefore we have $t_{22}c+ct_{22}=a_{12}t_{22}+b_{12}t_{22}$. Consequently,
  \begin{equation}\label{bb}
 t_{22}c_{21}+t_{22}c_{22}+c_{12}t_{22}+c_{22}t_{22}=a_{12}t_{22}+b_{12}t_{22}
 \end{equation}
  It follows that $t_{22}c_{21}=0$, and so, by Lemma \ref {lu}, $c_{21}=0$.

  Equation (\ref{bb}) also implies that $t_{22}c_{22}+c_{22}t_{22}=0$, which yields $c_{22}=0$.

  It follows from equation (\ref{bb}) that $c_{12}t_{22}=a_{12}t_{22}+t_{12}t_{22}$, and so $c_{12}=a_{12}+b_{12}$.

  To complete the proof it remains to show that $c_{11}=0$. For arbitrary $t_{12}\in \mathcal {R}_{12}$, by Lemma \ref{add}, we  compute $$M^{*^{-1}}(t_{12}c+ct_{12})=M^{*^{-1}}(t_{12}a_{12}+a_{12}t_{12})+M^{*^{-1}}(t_{12}b_{12}+b_{12}t_{12})=0.$$ Then $t_{12}c+ct_{12}=0$, consequently, $0=t_{12}c+ct_{12}=t_{12}c_{21}+t_{12}c_{22}+c_{11}t_{12}+c_{21}t_{12}=c_{11}t_{12}=0$. And so $c_{11}=0$. Therefore, $c=c_{12}=a_{12}+b_{12}$.
  \end{proof}

  \begin{lemma}\label{lemma21}
  The following hold.

  (i)   $M(a_{21}+b_{21})=M(a_{21})+M(b_{21})$;

 (ii)  $M^{*^{-1}}(a_{21}+b_{21})=M^{*^{-1}}(a_{21})+M^{*^{-1}}(b_{21})$.
 \end{lemma}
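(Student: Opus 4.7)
The plan is to mirror the proof of Lemma \ref{lemma12}, but with left multiplication by $\mathcal{R}_{22}$ playing the role that right multiplication by $\mathcal{R}_{22}$ played there, and with Lemma \ref{lemma121222}(iv) replacing Lemma \ref{lemma121222}(ii) at the key combining step. The guiding observation is that $\mathcal{R}_{22}\mathcal{R}_{21}\subseteq \mathcal{R}_{21}$, so probing a candidate preimage of $M(a_{21})+M(b_{21})$ by left multiplication with $t_{22}$ yields the Peirce information dual to what right multiplication by $t_{22}$ produced for elements of $\mathcal{R}_{12}$.

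Concretely, I pick $c=c_{11}+c_{12}+c_{21}+c_{22}\in\mathcal{R}$ with $M(c)=M(a_{21})+M(b_{21})$. For any $t_{22}\in\mathcal{R}_{22}$, Lemma \ref{add} gives
\[
M^{*^{-1}}(t_{22}c+ct_{22})=M^{*^{-1}}(t_{22}a_{21}+a_{21}t_{22})+M^{*^{-1}}(t_{22}b_{21}+b_{21}t_{22}).
\]
Since $a_{21}t_{22}=b_{21}t_{22}=0$, the right-hand side collapses, and Lemma \ref{lemma121222}(iv) (with its ``$a_{21}$'' taken to be $t_{22}a_{21}\in\mathcal{R}_{21}$ and its ``$b_{22}c_{21}$'' taken to be the product $t_{22}\cdot b_{21}$) recombines it into $M^{*^{-1}}(t_{22}a_{21}+t_{22}b_{21})$. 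By injectivity of $M^{*^{-1}}$ I get $t_{22}c+ct_{22}=t_{22}(a_{21}+b_{21})$. Expanding the left side by Peirce components and matching, Lemma \ref{lu} yields $c_{21}=a_{21}+b_{21}$, $c_{22}=0$, and $c_{12}=0$.

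To pin down $c_{11}$, I probe with $t_{21}\in\mathcal{R}_{21}$: all four products $t_{21}a_{21}$, $a_{21}t_{21}$, $t_{21}b_{21}$, $b_{21}t_{21}$ vanish, so Lemma \ref{add} gives $t_{21}c+ct_{21}=0$. Using the already-established $c_{12}=c_{22}=0$ this collapses to $t_{21}c_{11}=0$ for every $t_{21}$, whence $c_{11}=0$ by the dual form of Lemma \ref{lu}(i). Thus $c=a_{21}+b_{21}$, proving (i); part (ii) then follows from Lemma \ref{inverse}, exactly as in the preceding lemmas. The one subtle point, and the place where I expect to have to think, is to resist the naive choice of right multiplication by $t_{11}$, which would force splitting an expression of the form $M^{*^{-1}}(a_{21}t_{11}+b_{21}t_{11})$ that is \emph{not} covered by any clause of Lemma \ref{lemma121222}; routing the probe through $t_{22}$ on the left makes part (iv) applicable and renders the argument a clean dual of Lemma \ref{lemma12}.
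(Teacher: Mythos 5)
Your proposal is correct and matches the paper's own proof essentially step for step: the same probe by $t_{22}$ via Lemma \ref{add}, the same recombination through Lemma \ref{lemma121222} to get $t_{22}c+ct_{22}=t_{22}(a_{21}+b_{21})$ and hence $c_{12}=c_{22}=0$, $c_{21}=a_{21}+b_{21}$, and the same second probe by $t_{21}$ to force $c_{11}=0$. Your explicit identification of which clause of Lemma \ref{lemma121222} applies (part (iv), with $t_{22}b_{21}$ playing the role of $b_{22}c_{21}$) is a correct reading of the citation the paper leaves implicit.
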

 \begin{proof}Suppose that $M(a_{21})+M(b_{21})=M(c)$ for some $c=c_{11}+c_{12}+c_{21}+c_{22}\in \mathcal {R}$.

 For any $t_{22}\in \mathcal {R}_{22}$, using Lemma \ref{add} and Lemma \ref{lemma121222}, we have
 \begin{eqnarray*}
 M^{*^{-1}}(t_{22}c+ct_{22})&=&M^{*^{-1}}(t_{22}a_{21}+a_{21}t_{22})+M^{*^{-1}}(t_{22}b_{21}+b_{21}t_{22})\\
 &=&M^{*^{-1}}(t_{22}a_{21})+M^{*^{-1}}(t_{22}b_{21})=M^{*^{-1}}(t_{22}a_{21}+t_{22}b_{21})\\
 &=&M^{*^{-1}}(t_{22}(a_{21}+b_{21}))
 \end{eqnarray*}
 which implies that $t_{22}c+ct_{22}=t_{22}(a_{21}+b_{21})$, and so \begin{equation}\label{cc}
 t_{22}c_{21}+t_{22}c_{22}+c_{12}t_{22}+c_{22}t_{22}=t_{22}(a_{21}+b_{21}).\end{equation}
 It follows that $t_{22}c_{22}+c_{22}t_{22}=0$ and $c_{12}t_{22}=0$. By Lemma \ref{lu}, we have $c_{22}=0$ and $c_{12}=0$.

 Equation (\ref{cc}) also implies that $t_{22}c_{21}=t_{22}(a_{21}+b_{21})$, and so $c_{21}=a_{21}+b_{21}$.

 We now prove that $c_{11}=0$.  To this aim, for every $t_{21}\in \mathcal {R}_{21}$, we consider $$M^{*^{-1}}(t_{21}c+ct_{21})=M^{*^{-1}}(t_{21}a_{21}+a_{21}t_{21})+M^{*^{-1}}(t_{21}b_{21}+b_{21}t_{21})=0.$$
 Thus $t_{21}c+ct_{21}=0$. Then we have $0=t_{21}c+ct_{21}=t_{21}c_{11}+t_{21}c_{12}+c_{12}t_{21}+c_{22}t_{21}=t_{21}c_{11}$, and so $c_{11}=0$. Therefore, $c=c_{21}=a_{21}+b_{21}$. The proof is complete.
   \end{proof}

   \begin{lemma} \label{lemma11}For arbitrary $a_{11}, b_{11}\in \mathcal {R}_{11}$, we have

  (i)   $M(a_{11}+b_{11})=M(a_{11})+M(b_{11})$;

 (ii)  $M^{*^{-1}}(a_{11}+b_{11})=M^{*^{-1}}(a_{11})+M^{*^{-1}}(b_{11})$.
 \end{lemma}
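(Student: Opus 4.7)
The plan is to imitate the structure of Lemmas~\ref{lemma12} and~\ref{lemma21}. Since (ii) follows from (i) via Lemma~\ref{inverse}, I focus on (i): pick $c=c_{11}+c_{12}+c_{21}+c_{22}$ with $M(c)=M(a_{11})+M(b_{11})$ and aim to show $c=a_{11}+b_{11}$.

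The first step is to probe $c$ with an arbitrary $t_{22}\in\mathcal{R}_{22}$. Because $t_{22}a_{11}=a_{11}t_{22}=0$ and similarly for $b_{11}$, Lemma~\ref{add} forces $M^{*^{-1}}(t_{22}c+ct_{22})=0$, hence $t_{22}c+ct_{22}=0$. Reading off the four Peirce components of this equality and invoking Lemma~\ref{lu}(i),(iii) kills $c_{12}$, $c_{21}$, and $c_{22}$, so $c=c_{11}$.

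The second step is to probe with an arbitrary $t_{12}\in\mathcal{R}_{12}$. Using Lemma~\ref{add} together with the vanishing $t_{12}a_{11}=t_{12}b_{11}=0$, one obtains
\[
M^{*^{-1}}(t_{12}c+ct_{12})=M^{*^{-1}}(a_{11}t_{12})+M^{*^{-1}}(b_{11}t_{12}).
\]
Since $a_{11}t_{12},b_{11}t_{12}\in\mathcal{R}_{12}$, I invoke the already-proved additivity of $M^{*^{-1}}$ on $\mathcal{R}_{12}$ (Lemma~\ref{lemma12}(ii)) to collapse the right-hand side to $M^{*^{-1}}((a_{11}+b_{11})t_{12})$. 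Injectivity of $M^{*^{-1}}$, combined with $t_{12}c_{11}=0$, then yields $c_{11}t_{12}=(a_{11}+b_{11})t_{12}$ for every $t_{12}\in\mathcal{R}_{12}$, so $c_{11}=a_{11}+b_{11}$ by Lemma~\ref{lu}(i).

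The only delicate point is this last appeal to Lemma~\ref{lemma12}(ii): without the off-diagonal additivity already in hand, one cannot merge the two $M^{*^{-1}}$ terms into a single one and hence cannot strip off $M^{*^{-1}}$ by injectivity. This is precisely why the off-diagonal additivity lemmas (Lemmas~\ref{lemma12} and~\ref{lemma21}) had to be established before turning to the diagonal corners; once they are in place, the diagonal case follows the same Peirce-decomposition template as its predecessors.
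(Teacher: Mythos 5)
Your proof is correct and follows essentially the same route as the paper's: kill $c_{12},c_{21},c_{22}$ by probing with $t_{22}$, then identify $c_{11}$ by probing with $t_{12}$. The only difference is cosmetic -- you make explicit the appeal to Lemma~\ref{lemma12}(ii) in merging $M^{*^{-1}}(a_{11}t_{12})+M^{*^{-1}}(b_{11}t_{12})$ into a single term, a step the paper performs without comment.
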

 \begin{proof} We only prove (i). Let $c=c_{11}+c_{12}+c_{21}+c_{22}\in \mathcal {R}$ be chosen such that $M(c)=M(a_{11})+M(b_{11})$.

 For any $t_{22}\in \mathcal {R}_{22}$, by Lemma \ref{add}, we have $$M^{*^{-1}}(t_{22}c+ct_{22})=M^{*^{-1}}(t_{22}a_{11}+a_{11}t_{22})+M^{*^{-1}}(t_{22}b_{11}+b_{11}t_{22})=0.$$ This implies that $t_{22}c+ct_{22}=0$. By Lemma \ref {lu}, we get $c_{12}=c_{21}=c_{22}=0$.

 We now show that $c_{11}=a_{11}+b_{11}$. For arbitrary $t_{12}\in \mathcal {R}_{12}$. We compute
 \begin{eqnarray*}
 & &M^{*^{-1}}(t_{12}c+ct_{12})\\
 &=&M^{*^{-1}}(t_{12}a_{11}+a_{11}t_{12})+M^{*^{-1}}(t_{12}b_{11}+b_{11}t_{12})\\
 &=&M^{*^{-1}}(a_{11}t_{12})+M^{*^{-1}}(b_{11}t_{12})=M^{*^{-1}}((a_{11}+b_{11})t_{12}).
 \end{eqnarray*}
 It follows that $t_{12}c+ct_{12}=(a_{11}+b_{11})t_{12}$. Furthermore, $c_{11}t_{12}=(a_{11}+b_{11})t_{12}$, thus $c_{11}=a_{11}+b_{11}$.

 \end{proof}
 Similarly, we have
 \begin{lemma} \label{lemma22}For arbitrary $a_{22}, b_{22}\in \mathcal {R}_{22}$, we have

  (i)   $M(a_{22}+b_{22})=M(a_{22})+M(b_{22})$;

 (ii)  $M^{*^{-1}}(a_{22}+b_{22})=M^{*^{-1}}(a_{22})+M^{*^{-1}}(b_{22})$.
 \end{lemma}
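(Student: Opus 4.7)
The plan is to mirror the proof of Lemma \ref{lemma11}, swapping the roles of the indices $1$ and $2$ wherever possible. Concretely, I would choose $c=c_{11}+c_{12}+c_{21}+c_{22}\in\mathcal{R}$ such that $M(c)=M(a_{22})+M(b_{22})$, and then test $c$ against two kinds of Peirce multipliers exactly as in the previous lemmas.

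First, for arbitrary $t_{11}\in\mathcal{R}_{11}$, we have $t_{11}a_{22}=a_{22}t_{11}=0$ and likewise for $b_{22}$, so Lemma \ref{add} gives $M^{*^{-1}}(t_{11}c+ct_{11})=0$, hence $t_{11}c+ct_{11}=0$. Splitting into Peirce components (each living in a different $\mathcal{R}_{ij}$), this single equality produces the three separate equations $t_{11}c_{11}+c_{11}t_{11}=0$, $t_{11}c_{12}=0$, and $c_{21}t_{11}=0$ for all $t_{11}\in\mathcal{R}_{11}$. By Lemma \ref{lu}(iii) (using $2$-torsion-freeness for the index $i=1$) and Lemma \ref{lu}(i), we conclude $c_{11}=c_{12}=c_{21}=0$, so $c=c_{22}$.

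Second, for arbitrary $t_{12}\in\mathcal{R}_{12}$, we compute $t_{12}a_{22}+a_{22}t_{12}=t_{12}a_{22}\in\mathcal{R}_{12}$ and similarly for $b_{22}$. Combining Lemma \ref{add} with the additivity of $M^{*^{-1}}$ on $\mathcal{R}_{12}$ (Lemma \ref{lemma12}) yields
\begin{eqnarray*}
M^{*^{-1}}(t_{12}c+ct_{12}) &=& M^{*^{-1}}(t_{12}a_{22})+M^{*^{-1}}(t_{12}b_{22}) \\
&=& M^{*^{-1}}(t_{12}(a_{22}+b_{22})).
\end{eqnarray*}
Since $c=c_{22}$, the left hand side equals $M^{*^{-1}}(t_{12}c_{22})$, so $t_{12}c_{22}=t_{12}(a_{22}+b_{22})$ for every $t_{12}\in\mathcal{R}_{12}$, and the dual form of Lemma \ref{lu}(i) forces $c_{22}=a_{22}+b_{22}$. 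This completes (i), and (ii) then follows at once by applying (i) to the Jordan elementary pair $(M^{*^{-1}},M^{-1})$ supplied by Lemma \ref{inverse}.

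The only point requiring a little care is that, unlike the $\mathcal{R}_{11}$ case, the ring does not come with a strong symmetric $2$-torsion hypothesis on $\mathcal{R}_{22}$ — we only have the weaker condition (ii) of the Theorem. However, this is never needed: in Step 1 the diagonal equation lives in $\mathcal{R}_{11}$ (so ordinary $2$-torsion-freeness suffices), and in Step 2 the element $c_{22}$ is pinned down by a purely one-sided equation via Lemma \ref{lu}(i). Hence no new idea is required beyond the template already used for Lemma \ref{lemma11}.
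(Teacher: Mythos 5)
Your proof is correct and is exactly the index-swapped version of the argument for Lemma \ref{lemma11} that the paper intends when it writes ``Similarly, we have''; in particular your care about where $2$-torsion-freeness versus condition (ii) is needed, and your appeal to Lemma \ref{lemma12} for the additivity of $M^{*^{-1}}$ on $\mathcal{R}_{12}$, match the template of the preceding lemmas. Nothing further is needed.
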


  \begin{lemma}\label{lemma1122}
 For arbitrary $a_{11}\in \mathcal {R}_{11}$ and $b_{22}\in \mathcal {R}_{22}$, the following hold.

 (i) $M(a_{11}+b_{22})=M(a_{11})+M(b_{22})$;

 (ii) $M^{*^{-1}}(a_{11}+b_{22})=M^{*^{-1}}(a_{11})+M^{*^{-1}}(b_{22})$.
 \end{lemma}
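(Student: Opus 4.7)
The plan is to mirror the approach used in Lemmas \ref{lemma12} through \ref{lemma22}. I would pick $c=c_{11}+c_{12}+c_{21}+c_{22}\in\mathcal{R}$ with $M(c)=M(a_{11})+M(b_{22})$ and use Lemma \ref{add} with well-chosen test elements $t_{ij}$ to force $c=a_{11}+b_{22}$. Part (ii) will then follow from (i) by Lemma \ref{inverse}, exactly as in the preceding lemmas. The key simplification is that $a_{11}$ and $b_{22}$ sit in orthogonal Peirce corners, so when the test element lies in $\mathcal{R}_{11}$ or $\mathcal{R}_{22}$ one of the two summands gets annihilated on both sides, cleanly isolating the other.

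First I would test with $t_{11}\in\mathcal{R}_{11}$. Since $t_{11}b_{22}=b_{22}t_{11}=0$, Lemma \ref{add} yields
\[
M^{*^{-1}}(t_{11}c+ct_{11})=M^{*^{-1}}(t_{11}a_{11}+a_{11}t_{11}),
\]
so $t_{11}c+ct_{11}=t_{11}a_{11}+a_{11}t_{11}$. Expanding $c$ via its Peirce decomposition, the left side becomes $t_{11}c_{11}+c_{11}t_{11}+t_{11}c_{12}+c_{21}t_{11}$, while the right side lies entirely in $\mathcal{R}_{11}$. Matching corners, the $\mathcal{R}_{12}$ piece gives $t_{11}c_{12}=0$ and the $\mathcal{R}_{21}$ piece gives $c_{21}t_{11}=0$, whence $c_{12}=c_{21}=0$ by Lemma \ref{lu}(i) and its dual; the $\mathcal{R}_{11}$ piece gives $t_{11}(c_{11}-a_{11})+(c_{11}-a_{11})t_{11}=0$, whence $c_{11}=a_{11}$ by Lemma \ref{lu}(iii).

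Next I would test with $t_{22}\in\mathcal{R}_{22}$, which symmetrically annihilates the $a_{11}$ summand. Lemma \ref{add} then gives $t_{22}c+ct_{22}=t_{22}b_{22}+b_{22}t_{22}$. Since $c$ has already been reduced to $a_{11}+c_{22}$, this collapses to $t_{22}(c_{22}-b_{22})+(c_{22}-b_{22})t_{22}=0$, and Lemma \ref{lu}(iii) yields $c_{22}=b_{22}$. Consequently $c=a_{11}+b_{22}$, establishing (i); then (ii) is immediate from Lemma \ref{inverse}.

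There is no substantive obstacle here: all of the heavy lifting has already been absorbed into Lemmas \ref{lu}, \ref{add}, and the earlier single-corner additivity results. The only mild care required is the ordering of the two tests---running $t_{11}$ first sweeps away both off-diagonal pieces of $c$ and simultaneously pins down $c_{11}$, so that the subsequent $t_{22}$ test has only $c_{22}$ left to determine and no Peirce component is ever left unconstrained.
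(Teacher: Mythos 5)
Your proof is correct and follows the same Martindale-type strategy as the paper: choose a preimage $c$ of $M(a_{11})+M(b_{22})$, apply Lemma \ref{add} with Peirce test elements, and identify the components of $c$. The only difference is the choice of tests: the paper uses $t_{22}$ (killing $c_{12},c_{21}$ and pinning $c_{22}=b_{22}$) followed by $t_{12}$, and that second step forces it to invoke Lemma \ref{lemma12} to merge $M^{*^{-1}}(a_{11}t_{12})+M^{*^{-1}}(t_{12}b_{22})$ into a single term; your choice of $t_{11}$ then $t_{22}$ annihilates one summand in each test, so you never need the earlier off-diagonal additivity lemma --- a slightly cleaner dependency, with the same conclusion.
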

 \begin {proof}
 We only prove (i). Let  $c=c_{11}+c_{12}+c_{21}+c_{22}$ be an element of  $\mathcal {R}$ satisfying $M(c)=M(a_{11})+M(a_{22})$.

  For any $t_{22}\in \mathcal {R}_{22}$, we consider
  \begin{eqnarray*}
  M^{*^{-1}}(t_{22}c+ct_{22})&=&M^{*^{-1}}(t_{22}a_{11}+a_{11}t_{22})+M^{*^{-1}}(t_{22}b_{22}+b_{22}t_{22})\\
 &=&M^{*^{-1}}(t_{22}b_{22}+b_{22}t_{22}).
 \end{eqnarray*}
 This implies that $t_{22}c+ct_{22}=t_{22}b_{22}+b_{22}t_{22}$. Then we get  $t_{22}c_{21}=0$, $c_{12}t_{22}=0$, and $t_{22}c_{cc}+c_{22}t_{22}=t_{22}b_{22}+b_{22}t_{22}$. Again, by Lemma \ref{lu}, we have $c_{21}=c_{12}=0$,   and $c_{22}=b_{22}$.

 To complete the proof, we need to show that $c_{11}=a_{11}$. For any $t_{12}\in \mathcal {R}_{12}$, we obtain
 \begin{eqnarray*}
 M^{*^{-1}}(t_{12}c+ct_{12})&=&M^{*^{-1}}(t_{12}a_{11}+a_{11}t_{12})+M^{*^{-1}}(t_{12}b_{22}+b_{22}t_{12})\\
 &=&M^{*^{-1}}(a_{11}t_{12})+M^{*^{-1}}(t_{12}b_{22})\\
 &=&M^{*^{-1}}(a_{11}t_{12}+t_{12}b_{22}).
 \end{eqnarray*}
 Note that in the last equality we apply Lemma \ref {lemma12}.
 It follows that $t_{12}c+ct_{12}=a_{11}t_{12}+t_{12}b_{22}$, which leads to $t_{12}c_{21}+ t_{12}c_{22}+c_{11}t_{12}+c_{21}t_{12}=a_{11}t_{12}+t_{12}b_{22}$, and so $c_{11}t_{12}=a_{11}t_{12}$. Therefore, $c_{11}=a_{11}$. The proof is done.

 \end{proof}
  \begin{lemma}
 For arbitrary $a_{12}\in \mathcal {R}_{12}$ and $b_{21}\in \mathcal {R}_{21}$, we have

 (i) $M(a_{12}+b_{21})=M(a_{12})+M(b_{21})$;

 (ii) $M^{*^{-1}}(a_{12}+b_{21})=M^{*^{-1}}(a_{12})+M^{*^{-1}}(b_{21})$.
 \end{lemma}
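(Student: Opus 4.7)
Following the template of the preceding lemmas, I would pick $c = c_{11}+c_{12}+c_{21}+c_{22}\in \mathcal{R}$ satisfying $M(c) = M(a_{12}) + M(b_{21})$ by surjectivity of $M$, and then try to show $c = a_{12} + b_{21}$ by pinning down each Peirce component via Lemma \ref{add} with carefully chosen test elements $t\in\mathcal{R}$.

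The first obstacle is that the standard diagonal tests $t_{11}$ or $t_{22}$ fail here: for example $t_{22}a_{12}+a_{12}t_{22}=a_{12}t_{22}$ lies in $\mathcal{R}_{12}$ while $t_{22}b_{21}+b_{21}t_{22}=t_{22}b_{21}$ lies in $\mathcal{R}_{21}$, and combining these two $M^{*^{-1}}$-images inside a single argument would require exactly the additivity statement being proved. The trick is to replace these by the idempotents $e_1$ and $e_2$ themselves. Computing both sides of Lemma \ref{add} for these two choices, and using injectivity of $M^{*^{-1}}$, yields
\begin{equation*}
M^{*^{-1}}(2c_{11}+c_{12}+c_{21}) \;=\; M^{*^{-1}}(a_{12})+M^{*^{-1}}(b_{21}) \;=\; M^{*^{-1}}(c_{12}+c_{21}+2c_{22}),
\end{equation*}
so that $2c_{11}+c_{12}+c_{21}=c_{12}+c_{21}+2c_{22}$ and hence $2c_{11}=2c_{22}$. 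Sandwiching by $e_1$ on both sides and invoking $2$-torsion freeness of $\mathcal{R}$ kills $c_{11}$; sandwiching by $e_2$ kills $c_{22}$.

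With $c_{11}=c_{22}=0$ in hand, I would then apply Lemma \ref{add} twice more, with $t=t_{12}\in \mathcal{R}_{12}$ and with $t=t_{21}\in \mathcal{R}_{21}$. Since $t_{12}a_{12}+a_{12}t_{12}=0$ and $t_{21}b_{21}+b_{21}t_{21}=0$, only one summand survives on the right in each case, and injectivity of $M^{*^{-1}}$ collapses the conclusion to
\begin{equation*}
t_{12}c+ct_{12}=t_{12}b_{21}+b_{21}t_{12}, \qquad t_{21}c+ct_{21}=t_{21}a_{12}+a_{12}t_{21}.
\end{equation*}
Expanding the left-hand sides with $c=c_{12}+c_{21}$ and reading off the $\mathcal{R}_{11}$-component, Lemma \ref{lu}(i) forces $c_{21}=b_{21}$ and $c_{12}=a_{12}$. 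Thus $c=a_{12}+b_{21}$, which gives (i); part (ii) then follows by applying (i) to the Jordan elementary map $(M^{*^{-1}},M^{-1})$ supplied by Lemma \ref{inverse}, as in all the earlier lemmas.

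The main difficulty is the circularity flagged in the second paragraph: the Peirce-block tests inherited from the earlier lemmas lump the $\mathcal{R}_{12}$- and $\mathcal{R}_{21}$-contributions of $a_{12}$ and $b_{21}$ into a sum whose additivity under $M^{*^{-1}}$ is precisely the statement under proof. The idempotent tests $e_1$ and $e_2$ bypass this by singling out the diagonal components $c_{11}$ and $c_{22}$ first; once those vanish, the off-diagonal identification becomes routine block bookkeeping via Lemma \ref{lu}.
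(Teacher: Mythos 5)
Your off-diagonal bookkeeping coincides with the paper's own argument: the paper tests Lemma \ref{add} with $t_{12}\in\mathcal{R}_{12}$ and $t_{21}\in\mathcal{R}_{21}$, observes that $t_{12}a_{12}+a_{12}t_{12}=0$ and $t_{21}b_{21}+b_{21}t_{21}=0$ (so only one summand survives on the right and the circularity you worry about never arises), and extracts $c_{21}=b_{21}$ and $c_{12}=a_{12}$ from the resulting identities $t_{12}c+ct_{12}=t_{12}b_{21}+b_{21}t_{12}$ and $t_{21}c+ct_{21}=t_{21}a_{12}+a_{12}t_{21}$ via Lemma \ref{lu}. Note that these two identities hold without first knowing $c_{11}=c_{22}=0$; you only need their $\mathcal{R}_{11}$-components, so your preliminary step is not needed for the off-diagonal blocks.

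The genuine gap is your use of $t=e_2$ in Lemma \ref{add}. That lemma requires $t\in\mathcal{R}$, and the theorem explicitly allows $\mathcal{R}$ to be non-unital, in which case $e_2=1-e_1$ is \emph{not} an element of $\mathcal{R}$; it is only a notational device ($e_2ae_2$ abbreviates $a-e_1a-ae_1+e_1ae_1$). So the equation $M^{*^{-1}}(c_{12}+c_{21}+2c_{22})=M^{*^{-1}}(a_{12})+M^{*^{-1}}(b_{21})$ is unavailable, and the $e_1$-test alone leaves you with $M^{*^{-1}}(2c_{11}+c_{12}+c_{21})$ equal to an uncombinable sum $M^{*^{-1}}(a_{12})+M^{*^{-1}}(b_{21})$ --- no conclusion about $c_{11}$ or $c_{22}$. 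The paper instead disposes of the diagonal blocks inside the same two off-diagonal tests: from $t_{12}c+ct_{12}=t_{12}b_{21}+b_{21}t_{12}$ it reads off $t_{12}c_{22}=0$ and $c_{11}t_{12}=0$ and applies Lemma \ref{lu}. (Strictly, the Peirce decomposition only yields $t_{12}c_{22}+c_{11}t_{12}=0$ with both terms in $\mathcal{R}_{12}$, so the paper's separation of the two terms is asserted rather than derived; but that is the route the paper takes, and your $e_1/e_2$ detour does not repair it except when $\mathcal{R}$ is unital, where your argument is correct and arguably cleaner.)
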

 \begin {proof}
 Let $c=c_{11}+c_{12}+c_{21}+c_{22}\in \mathcal {R}$ be chosen such that $M(c)=M(a_{12})+M(a_{21})$.

 Now for arbitrary $t_{12}\in \mathcal {R}_{12}$, we have
 $$M^{*^{-1}}(t_{12}c+ct_{12})=M^{*^{-1}}(t_{12}a_{12}+a_{12}t_{12})+M^{*^{-1}}(t_{12}b_{21}+b_{21}t_{12})=M^{*^{-1}}(t_{12}b_{21}+b_{21}t_{12}).$$
 Therefore
 $$ t_{12}c+ct_{12}=t_{12}b_{21}+b_{21}t_{12},$$
 i.e., $t_{12}c_{11}+t_{12}c_{22}+c_{11}t_{12}+c_{21}t_{12}=t_{12}b_{21}+b_{21}t_{12}$. This implies $t_{12}c_{22}=0$, $c_{11}t_{12}=0$, and  $c_{21}t_{12}=b_{21}t_{12}$. Applying Lemma \ref {lu}, we get $c_{22}=c_{11}=0$ and $c_{21}=b_{21}$.

 We now show that $c_{12}=a_{12}$. For any $t_{21}\in \mathcal {R}_{21}$, we obtain 
 \begin{eqnarray*}
& &M^{*^{-1}}(t_{21}c+ct_{21})\\
&=&M^{*^{-1}}(t_{21}a_{12}+a_{12}t_{21})+M^{*^{-1}}(t_{21}b_{21}+b_{21}t_{21})\\
&=&M^{*^{-1}}(t_{21}a_{12}+a_{12}t_{21}).
\end{eqnarray*}
This leads to $$t_{21}c+ct_{21}=t_{21}a_{12}+a_{12}t_{21}.$$
 Multiplying the above equality from the left by $e_1$, we arrive at $c_{12}t_{21}=a_{12}t_{21}$. And so $c_{12}=a_{12}$, as desired.

 \end{proof}

  \begin{lemma} \label{lemma111221}For any $a_{11}\in \mathcal {R}_{11}$, $b_{12}\in \mathcal {R}_{12}$, and $c_{21}\in \mathcal {R}_{21}$, we have

  (i) $M(a_{11}+b_{12}+c_{21})=M(a_{11})+M(b_{12})+M(c_{21})$;

  (ii) $M^{*^{-1}}(a_{11}+b_{12}+c_{21})=M^{*^{-1}}(a_{11})+M^{*^{-1}}(b_{12})+M^{*^{-1}}(c_{21})$.

 \end{lemma}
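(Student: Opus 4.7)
The plan is to choose, by surjectivity of $M$, an element $d = d_{11}+d_{12}+d_{21}+d_{22}\in\mathcal{R}$ with $M(d) = M(a_{11}) + M(b_{12}) + M(c_{21})$, and then to show $d_{11}=a_{11}$, $d_{12}=b_{12}$, $d_{21}=c_{21}$, $d_{22}=0$; part (ii) will follow from (i) by Lemma \ref{inverse}. Iterating Lemma \ref{add} on this three-term decomposition will give, for every $t\in\mathcal{R}$, the key identity
\[
M^{*^{-1}}(td+dt) = M^{*^{-1}}(ta_{11}+a_{11}t) + M^{*^{-1}}(tb_{12}+b_{12}t) + M^{*^{-1}}(tc_{21}+c_{21}t).
\]

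First I would test this identity at $t=t_{22}\in\mathcal{R}_{22}$: the first summand on the right vanishes, and the other two simplify to $M^{*^{-1}}(b_{12}t_{22}) + M^{*^{-1}}(t_{22}c_{21})$, which by the preceding (unlabeled) lemma on $M(a_{12}+b_{21})$ combines into $M^{*^{-1}}(b_{12}t_{22}+t_{22}c_{21})$. Injectivity of $M^{*^{-1}}$ then yields $t_{22}d+dt_{22} = b_{12}t_{22}+t_{22}c_{21}$, and a block-by-block comparison via Lemma \ref{lu} produces $d_{12}=b_{12}$, $d_{21}=c_{21}$, and $d_{22}=0$ in one stroke.

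The main obstacle is pinning down $d_{11}$. Testing the key identity at $t=t_{12}\in\mathcal{R}_{12}$ reduces its right-hand side to $M^{*^{-1}}(a_{11}t_{12}) + M^{*^{-1}}(t_{12}c_{21}+c_{21}t_{12})$, and no two-term additivity lemma collapses this into a single $M^{*^{-1}}$ directly, since the two summands straddle $\mathcal{R}_{12}$ and $\mathcal{R}_{11}+\mathcal{R}_{22}$. My trick is to exploit Lemma \ref{lemmaiiji}, which gives $M(a_{11}+c_{21}) = M(a_{11}) + M(c_{21})$; applying Lemma \ref{add} to this two-term identity at the same $t=t_{12}$ produces exactly the same two-term sum $M^{*^{-1}}(a_{11}t_{12}) + M^{*^{-1}}(t_{12}c_{21}+c_{21}t_{12})$, but now visibly equal to $M^{*^{-1}}(a_{11}t_{12}+t_{12}c_{21}+c_{21}t_{12})$. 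Equating the two expressions gives $M^{*^{-1}}(t_{12}d+dt_{12}) = M^{*^{-1}}(a_{11}t_{12}+t_{12}c_{21}+c_{21}t_{12})$, and injectivity forces $t_{12}d+dt_{12} = a_{11}t_{12}+t_{12}c_{21}+c_{21}t_{12}$. With $d = d_{11}+b_{12}+c_{21}$ from the previous step, the left side equals $t_{12}c_{21}+d_{11}t_{12}+c_{21}t_{12}$, forcing $d_{11}t_{12}=a_{11}t_{12}$ for every $t_{12}\in\mathcal{R}_{12}$, and Lemma \ref{lu}(i) finishes with $d_{11}=a_{11}$.
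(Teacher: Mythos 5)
Your proof is correct, and its skeleton is the paper's: fix a preimage $d$ of $M(a_{11})+M(b_{12})+M(c_{21})$ and pin down its Peirce components by testing Lemma \ref{add} against suitable $t_{ij}$; in fact your treatment of $d_{11}$ (regroup via Lemma \ref{lemmaiiji} to get $M(d)=M(a_{11}+c_{21})+M(b_{12})$, then apply Lemma \ref{add} at $t_{12}$) is exactly the paper's second step. The genuine divergence is in the first half. The paper tests at $t_{21}\in\mathcal{R}_{21}$ against the grouping $M(d)=M(a_{11}+b_{12})+M(c_{21})$, which yields $d_{11}=a_{11}$ and $d_{12}=b_{12}$ immediately and, revisited at the end, gives $d_{22}t_{21}=0$ and hence $d_{22}=0$ using only hypothesis (i) of Theorem \ref{theorem}. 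You instead test a fully split three-term identity at $t_{22}\in\mathcal{R}_{22}$ and collapse the right-hand side with the preceding $\mathcal{R}_{12}\oplus\mathcal{R}_{21}$ additivity lemma, harvesting $d_{12}$, $d_{21}$ and $d_{22}$ in one pass. This is slightly more economical, but it has two costs worth recording: first, your $d_{22}=0$ comes from $t_{22}d_{22}+d_{22}t_{22}=0$, i.e.\ from Lemma \ref{lu}(iii) with $i=2$, which is precisely hypothesis (ii) of the theorem, whereas the paper's route needs only hypothesis (i) at that point; second, the ``iterated'' three-term form of Lemma \ref{add} is not literally what that lemma states --- to run the iteration you must first replace $M(a_{11})+M(b_{12})$ by $M(a_{11}+b_{12})$ using Lemma \ref{lemmaiiij}, and this regrouping should be made explicit. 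Both points are harmless here (all the needed lemmas and hypotheses are available), so the argument stands as a valid, mildly different organization of the same proof.
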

 \begin{proof}
 We pick $d=d_{11}+d_{12}+d_{21}+d_{22}\in \mathcal {R}$ such that $M(d)=M(a_{11})+M(b_{12})+M(c_{21})$. By Lemma \ref{lemmaiiij} and Lemma \ref{lemmaiiji}, we have

\begin{equation}\label{ee}
M(d)=M(a_{11}+b_{12})+M(c_{21})
\end{equation}
and
\begin{equation}\label{ff}
M(d)=M(a_{11}+c_{21})+M(b_{12}).
\end{equation}
For any $t_{21}\in \mathcal {R}_{21}$, by Lemma \ref{add} and
equation (\ref{ee}), we have
\begin{eqnarray*}
& &M^{*^{-1}}(t_{21}d+dt_{21})\\
&=&M(^{*^{-1}}t_{21}(a_{11}+b_{12})+(a_{11}+b_{12})t_{21})+M^{*^{-1}}(t_{21}c_{21}+c_{21}t_{21})\\
&=&M^{*^{-1}}(t_{21}a_{11}+t_{21}b_{12}+b_{12}t_{21}),
\end{eqnarray*}
which yields that
\begin{equation}\label{gg}
t_{21}d+dt_{21}=t_{21}a_{11}+t_{21}b_{12}+b_{12}t_{21}.
\end{equation}
Multiplying equality (\ref{gg})  by $e_2$ from the right and the
left respectively, we get $t_{21}d_{12}=t_{21}b_{12}$ and
$t_{21}d_{11}=t_{21}a_{11}$, and so $d_{12}=b_{12}$ and
$d_{11}=a_{11}$.

We now show that $d_{21}=c_{21}$ and $d_{22}=0$. For arbitrary
$t_{12}\in \mathcal {R}_{12}$, using Lemma \ref{add} and equality
(\ref{ff}), we have
\begin{eqnarray*}
& &M^{*^{-1}}(t_{12}d+dt_{12})\\
&=&M^{*^{-1}}(t_{12}(a_{11}+c_{21})+(a_{11}+c_{21})t_{12})+M^{*^{-1}}(t_{12}b_{12}+b_{12}t_{12})\\
&=&M^{*^{-1}}(t_{12}c_{21}+a_{11}t_{12}+c_{21}t_{12}).
\end{eqnarray*}
This leads to
\begin{equation}\label{hh}
t_{12}d+dt_{12}=t_{12}c_{21}+a_{11}t_{12}+c_{21}t_{12}.
\end{equation}
Multiply equation (\ref{hh}) by $e_1$ from the right, we get
$t_{12}d_{21}=t_{12}c_{21}$ and so $d_{21}=c_{21}$.

Now equality (\ref{gg}) turns to be $d_{22}t_{21}=0$ and so
$d_{22}=0$. Therefore $d=a_{11}+b_{12}+c_{21}$.

By Lemma \ref{inverse}, we see that equality (ii) also holds.
\end{proof}

 Similarly, we have the following
 \begin{lemma}\label{lemma122122}
 For any $a_{12}\in \mathcal {R}_{12}$, $b_{21}\in \mathcal {R}_{21}$, and $c_{22}\in \mathcal {R}_{22}$, we have

  (i) $M(a_{12}+b_{21}+c_{22})=M(a_{12})+M(b_{21})+M(c_{22})$;

  (ii) $M^{*^{-1}}(a_{12}+b_{21}+c_{22})=M^{*^{-1}}(a_{12})+M^{*^{-1}}(b_{21})+M^{*^{-1}}(c_{22})$.
  \end{lemma}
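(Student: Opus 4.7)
The plan is to mimic the proof of Lemma \ref{lemma111221}, using the previously established two-term additivity lemmas to handle this three-term sum. I would start by choosing $d=d_{11}+d_{12}+d_{21}+d_{22}\in\mathcal{R}$ with $M(d)=M(a_{12})+M(b_{21})+M(c_{22})$. Applying Lemma \ref{lemmaiiji} (for the pair $\mathcal{R}_{22}+\mathcal{R}_{12}$) and the unnumbered lemma immediately preceding Lemma \ref{lemma111221} (for $\mathcal{R}_{12}+\mathcal{R}_{21}$), I rewrite this identity in two forms:
\begin{equation*}
M(d) \;=\; M(a_{12}+b_{21})+M(c_{22}) \;=\; M(a_{12}+c_{22})+M(b_{21}).
\end{equation*}
The strategy is to test $d$ against a suitable $t_{ij}\in\mathcal{R}_{ij}$ via Lemma \ref{add}, matching the test element to one of these decompositions so that one of the three summands produces a zero contribution and the remaining expression already fits inside a single $M^{*^{-1}}(\cdot)$.

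For $t_{11}\in\mathcal{R}_{11}$ we have $t_{11}c_{22}+c_{22}t_{11}=0$, so Lemma \ref{add} applied to the first decomposition yields
\begin{equation*}
M^{*^{-1}}(t_{11}d+dt_{11}) \;=\; M^{*^{-1}}\bigl(t_{11}(a_{12}+b_{21})+(a_{12}+b_{21})t_{11}\bigr).
\end{equation*}
Since $t_{11}b_{21}=0=a_{12}t_{11}$, the inner expression collapses to $t_{11}a_{12}+b_{21}t_{11}$, a single element. Injectivity of $M^{*^{-1}}$ then gives $t_{11}d+dt_{11}=t_{11}a_{12}+b_{21}t_{11}$; expanding the left-hand side by blocks and invoking Lemma \ref{lu}(i), (iii) forces $d_{11}=0$, $d_{12}=a_{12}$, and $d_{21}=b_{21}$.

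To recover $d_{22}$ I repeat the argument with $t_{21}\in\mathcal{R}_{21}$, using the second decomposition. Now $t_{21}b_{21}+b_{21}t_{21}=0$, and the remaining inner expression $t_{21}a_{12}+a_{12}t_{21}+c_{22}t_{21}$ is again a single element, so injectivity yields $t_{21}d+dt_{21}=t_{21}a_{12}+a_{12}t_{21}+c_{22}t_{21}$. Reading off the $\mathcal{R}_{21}$ component and using $d_{11}=0$ already in hand gives $d_{22}t_{21}=c_{22}t_{21}$ for every $t_{21}$, and Lemma \ref{lu}(i) forces $d_{22}=c_{22}$. Therefore $d=a_{12}+b_{21}+c_{22}$, which proves (i); part (ii) follows at once from Lemma \ref{inverse}. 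The only delicate point is the bookkeeping of pairing each test element $t_{ij}$ with the correct decomposition of $M(d)$ so that exactly two of the three summands survive and those two already fall under an established two-term lemma; once that pairing is fixed, the rest is routine block arithmetic.
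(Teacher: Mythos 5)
Your proof is correct and is essentially the argument the paper intends: the paper omits the proof of this lemma with a "similarly," referring to the proof of Lemma \ref{lemma111221}, and your argument is the faithful analogue of that proof (two two-term decompositions of $M(d)$ via the earlier pairwise lemmas, tested against suitable $t_{ij}$ through Lemma \ref{add} and resolved blockwise with Lemma \ref{lu}). The choice of test elements $t_{11}$ and $t_{21}$ and the bookkeeping all check out.
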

 \begin{lemma}\label{lemma11122122}
  For any $a_{11}\in \mathcal {R}_{11}$, $b_{12}\in \mathcal {R}_{12}$, $c_{21}\in \mathcal {R}_{21}$, and $d_{22}\in \mathcal {R}_{22}$, the following hold.

(i)
$M(a_{11}+b_{12}+c_{21}+d_{22})=M(a_{11})+M(b_{12})+M(c_{21})+M(d_{22})$;

(ii)
$M^{*^{-1}}(a_{11}+b_{12}+c_{21}+d_{22})=M^{*^{-1}}(a_{11})+M^{*^{-1}}(b_{12})+M^{*^{-1}}(c_{21})+M^{*^{-1}}(d_{22})$.
 \end{lemma}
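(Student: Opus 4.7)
I plan to follow the template of the preceding three-summand additivity results. Pick $f = f_{11}+f_{12}+f_{21}+f_{22}\in\mathcal{R}$ with
\[
M(f) = M(a_{11})+M(b_{12})+M(c_{21})+M(d_{22}),
\]
and aim to show $f = a_{11}+b_{12}+c_{21}+d_{22}$; injectivity of $M$ then gives (i), and (ii) is immediate from Lemma~\ref{inverse}.

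The first step is to repackage $M(f)$ in two complementary two-term decompositions using the triple-sum lemmas. By Lemma~\ref{lemma111221}(i),
\[
M(f) = M(a_{11}+b_{12}+c_{21})+M(d_{22}),
\]
and by Lemma~\ref{lemma122122}(i),
\[
M(f) = M(a_{11})+M(b_{12}+c_{21}+d_{22}).
\]
The key observation is that $t_{11}d_{22}+d_{22}t_{11}=0$ and $t_{22}a_{11}+a_{11}t_{22}=0$, so choosing $t=t_{11}$ in Lemma~\ref{add} with the first decomposition, and $t=t_{22}$ with the second, makes one of the two $M^{*^{-1}}$ summands produced by Lemma~\ref{add} vanish outright. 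Injectivity of $M^{*^{-1}}$ then yields
\[
t_{11}f+ft_{11} = t_{11}(a_{11}+b_{12}+c_{21})+(a_{11}+b_{12}+c_{21})t_{11}
\]
and
\[
t_{22}f+ft_{22} = t_{22}(b_{12}+c_{21}+d_{22})+(b_{12}+c_{21}+d_{22})t_{22}
\]
for every $t_{11}\in\mathcal{R}_{11}$ and $t_{22}\in\mathcal{R}_{22}$.

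The remainder is routine Peirce bookkeeping. Expanding the first identity into its $\mathcal{R}_{11}\oplus\mathcal{R}_{12}\oplus\mathcal{R}_{21}$ components and invoking Lemma~\ref{lu}(iii) (which is where $2$-torsion freeness enters), Lemma~\ref{lu}(i), and the dual half of Lemma~\ref{lu}(i), I would read off $f_{11}=a_{11}$, $f_{12}=b_{12}$, and $f_{21}=c_{21}$. Expanding the second identity and matching the $\mathcal{R}_{22}$-component via Lemma~\ref{lu}(iii) then forces $f_{22}=d_{22}$, which assembles to $f=a_{11}+b_{12}+c_{21}+d_{22}$ as required.

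I do not expect any real obstacle; the argument is essentially mechanical once the two complementary vanishing choices $t=t_{11}$ and $t=t_{22}$ are pinned down. The only point requiring a bit of care is verifying that the $M^{*^{-1}}$-summand that is \emph{not} killed reduces to a single $M^{*^{-1}}$-value rather than a sum --- and here the vanishing is so clean that one does not even need to invoke the additivity clauses (ii) of Lemmas~\ref{lemma111221} and~\ref{lemma122122}; bare application of Lemma~\ref{add} followed by injectivity of $M^{*^{-1}}$ suffices, exactly as in the proof of Lemma~\ref{lemma111221}.
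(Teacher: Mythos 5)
Your argument is correct, but it follows a different route from the paper's. The paper splits the sum diagonally/off-diagonally, writing $M(f)=M(a_{11}+d_{22})+M(b_{12}+c_{21})$ via Lemma \ref{lemma1122} and the $\mathcal{R}_{12}+\mathcal{R}_{21}$ lemma, then applies Lemma \ref{add} with the single concrete element $t=e_1$: this yields $e_1f+fe_1=2a_{11}+b_{12}+c_{21}$ (here Lemma \ref{lemma111221} is invoked to recombine $M^{*^{-1}}(2a_{11})+M^{*^{-1}}(b_{12})+M^{*^{-1}}(c_{21})$ into one term), and comparing Peirce components identifies $f_{11},f_{12},f_{21}$ in one stroke, with $2$-torsion-freeness used to cancel the factor $2$; a final computation with arbitrary $t_{12}$ gives $f_{22}=d_{22}$. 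You instead use the two triple-sum decompositions from Lemmas \ref{lemma111221} and \ref{lemma122122}, and exploit the orthogonality $t_{11}d_{22}+d_{22}t_{11}=0$ and $t_{22}a_{11}+a_{11}t_{22}=0$ so that each application of Lemma \ref{add} collapses to a single $M^{*^{-1}}$-value (using $M^{*^{-1}}(0)=0$ and injectivity); quantifying over all $t_{11}$ then pins down $f_{11},f_{12},f_{21}$ by Lemma \ref{lu}(i) and (iii), and all $t_{22}$ pins down $f_{22}$ by Lemma \ref{lu}(iii) with $i=2$ (condition (ii) of the theorem). Both proofs are sound; yours is more uniform in that it needs no distinguished element and no recombination step, at the cost of relying on Lemma \ref{lemma122122}, which the paper states only by analogy, whereas the paper's version gets three components simultaneously from the single equation at $t=e_1$.
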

 \begin{proof}
 We choose $f=f_{11}+f_{12}+f_{21}+f_{22}\in \mathcal {R}$ such that $$M(f)=M(a_{11})+M(b_{12})+M(c_{21})+M(d_{22})=M(a_{11}+d_{22})+M(b_{12}+c_{21}).$$
 We compute
 \begin{eqnarray*}
 & &M^{*^{-1}}(e_1f+fe_1)\\
 &=&M^{*^{-1}}(e_1(a_{11}+d_{22})+(a_{11}+d_{22})e_1)+M^{*^{-1}}(e_1(b_{12}+c_{21})+(b_{12}+c_{21})e_1)\\
 &=&M^{*^{-1}}(a_{11}+a_{11})+M^{*^{-1}}(b_{12}+c_{21})=M^{*^{-1}}(2a_{11})+M^{*^{-1}}(b_{12})+M^{*^{-1}}(c_{21})\\
 &=&M^{*^{-1}}(2a_{11}+b_{12}+c_{21}).
 \end{eqnarray*}
 Note that in the last equality we apply Lemma \ref{lemma111221}. Then we get $e_1f+fe_1=2a_{11}+b_{12}+c_{21}$. Furthermore, we have
 $$2f_{11}+f_{12}+f_{21}=2a_{11}+b_{12}+c_{21}.$$
 Multiplying the above equality by $e_2$ from the left and the right respectively,we can infer that $f_{12}=b_{12}$, $f_{21}=c_{21}$, and $f_{11}=a_{11}$.

 We need to show $f_{22}=d_{22}$ in order to complete the proof. For any $t_{12}\in \mathcal {R}_{12}$, we consider
 \begin{eqnarray*}
 & &M^{*^{-1}}(t_{12}f+ft_{12})\\
 &=&M^{*^{-1}}(t_{12}(a_{11}+d_{22})+(a_{11}+d_{22})t_{12})+M^{*^{-1}}(t_{12}(b_{12}+c_{21})+(b_{12}+c_{21})t_{12})\\
 &=&M^{*^{-1}}(t_{12}d_{22}+a_{11}t_{12})+M^{*^{-1}}(t_{12}c_{21}+c_{21}t_{12})\\
 &=&M^{*^{-1}}(t_{12}d_{22}+a_{11}t_{12}+t_{12}c_{21}+c_{21}t_{12}).
 \end{eqnarray*}
 Consequently, $$t_{12}f+ft_{12}=t_{12}d_{22}+a_{11}t_{12}+t_{12}c_{21}+c_{21}t_{12},$$ this implies that $t_{12}f_{22}=t_{12}d_{22}$. Thus $f_{22}=d_{22}$.
 \end{proof}
  \noindent \textbf{Proof of Theorem \ref{theorem}} We first show that $M$ is additive. Let $a=a_{11}+a_{12}+a_{21}+a_{22}$ and $b=b_{11}+b_{12}+b_{21}+b_{22}$ be two arbitrary elements of $\mathcal {R}$. Then
 \begin{eqnarray*}
 & &M(a+b)\\
 &=&M((a_{11}+b_{11})+(a_{12}+b_{12})+(a_{21}+b_{21})+(a_{22}+b_{22}))\\
 &=&M(a_{11}+b_{11})+M(a_{12}+b_{12})+M(a_{21}+b_{21})+M(a_{22}+b_{22})\\
 &=&M(a_{11})+M(b_{11})+M(a_{12})+M(b_{12})+M(a_{21})+M(b_{21})+M(a_{22})+M(b_{22})\\
 &=&M(a_{11}+a_{12}+a_{21}+a_{22})+M(b_{11}+b_{12}+b_{21}+b_{22})\\
 &=&M(a)+M(b).
 \end{eqnarray*}
 That is, $M$ is additive.

We now  prove the additivity of $M^*$. For any $x, y\in \mathcal {R}^{\prime}$, there exist $c=c_{11}+c_{12}+c_{21}+c_{22}$ and $d=d_{11}+d_{12}+d_{21}+d_{22}$ in $\mathcal {R}$ such that $c=M^*(x)+M^*(y)$ and $d=M^*(x+y)$.

 For arbitrary $t_{ij}\in \mathcal {R}_{ij}$ ($1\leq i, j\leq 2$), using the additivity of $M$, we compute
 \begin{eqnarray*}
 & &M(t_{ij}c+ct_{ij})\\
 &=&M(t_{ij}(M^*(x)+M^*(y))+(M^*(x)+M^*(y))t_{ij})\\
 &=&M(t_{ij}M^*(x))+M(t_{ij}M^*(y))+M(M^*(x)t_{ij})+M(M^*(y)t_{ij})\\
 &=&M(t_{ij}M^*(x)+M^*(x)t_{ij})+M(t_{ij}M^*(y)+M^*(y)t_{ij})\\
 &=&M(t_{ij})x+xM(t_{ij})+M(t_{ij})y+yM(t_{ij})\\
 &=&M(t_{ij})(x+y)+(x+y)M(t_{ij})\\
 &=&M(t_{ij}M^*(x+y)+M^*(x+y)t_{ij})\\
 &=&M(t_{ij}d+dt_{ij}).
 \end{eqnarray*}
Therefore,
\begin{equation}\label{ii}
t_{ij}c+ct_{ij}=t_{ij}d+dt_{ij}.
\end{equation}

Letting $i=j=1$ in equality (\ref{ii}), we get
\begin{equation}\label{jj}
t_{11}c_{11}+t_{11}c_{12}+c_{11}t_{11}+c_{21}t_{11}=t_{11}d_{11}+t_{11}d_{12}+d_{11}t_{11}+d_{21}t_{11}.
\end{equation}
Multiply the above equation by $e_1$ from both sides, we get
$t_{11}c_{11}+c_{11}t_{11}=t_{11}d_{11}+d_{11}t_{11}$, which implies
that $c_{11}=d_{11}$.

Now equality (\ref {jj}) becomes
$$t_{11}c_{12}+c_{21}t_{11}=t_{11}d_{12}+d_{21}t_{11}.$$
This yields that $t_{11}c_{12}=t_{11}d_{12}$ and $c_{21}t_{11}=d_{21}t_{11}$. Then, by Lemma \ref{lu}, it follows that 
$c_{12}=d_{12}$ and $c_{21}=d_{21}$.

To show $c_{22}=d_{22}$, we let $i=j=2$ in equality (\ref{ii}) and
get
$$t_{22}c_{21}+t_{22}c_{22}+c_{12}t_{22}+c_{22}t_{22}=t_{22}d_{21}+t_{22}d_{22}+d_{12}t_{22}+d_{22}t_{22},$$
which leads to
$t_{22}c_{22}+c_{22}t_{22}=t_{22}d_{22}+d_{22}t_{22}$, and so
$c_{22}=d_{22}$. Consequently, $c=d$, hence
$M^*(x+y)=M^*(x)+M^*(y)$, which completes the proof.

For the case of Jordan elementary maps on prime rings we have the
following result.
\begin{corollary}
Let $\mathcal {R}$ be a $2$-torsion free prime ring containing a
nontrivial idempotent $e_1$, and $\mathcal {R}^{\prime }$ be an
arbitrary ring.  Suppose that $e_2ae_2be_2+e_2be_2ae_2=0$ for each
$b\in \mathcal {R}$ implies $e_2ae_2=0$. Let $M\colon {\mathcal
R}\rightarrow {\mathcal R}^{\prime }$ and $M^*\colon {\mathcal
R}^{\prime }\rightarrow {\mathcal R}$ be two surjective maps such
that
\begin{displaymath}
 \left\{ \begin{array}{ll}
 M(aM^*(x)+M^*(x)a)=M(a)x+xM(a),\\
 M^*(M(a)x+xM(a))=aM^*(x)+M^*(x)a
\end{array}\right.
\end{displaymath}
 for all $a\in \mathcal {R}$, $x\in {\mathcal R}^{\prime }$. Then both $M$ and $M^*$ are additive.
\end{corollary}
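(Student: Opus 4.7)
The strategy I would follow is to treat this Corollary as a direct reduction to Theorem \ref{theorem}: it suffices to verify that the two structural assumptions (i) and (ii) of that Theorem are implied by the hypotheses of the Corollary, after which the conclusion is simply a re-statement of Theorem \ref{theorem}.

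Condition (ii) is assumed verbatim in the Corollary, so nothing has to be done there. The substantive step is to deduce condition (i) from primeness. I would set $u=e_iae_j\in\mathcal{R}$ and rewrite $e_iae_j\mathcal{R}e_k=\{0\}$ as $u\mathcal{R}e_k=\{0\}$. Primeness of $\mathcal{R}$ then forces $u=0$ or $e_k=0$. Since $e_1$ is a nontrivial idempotent we have $e_1\neq 0$ and also $e_2=1-e_1\neq 0$ (otherwise $e_1$ would act as an identity and hence be trivial), so $e_k\neq 0$ and we conclude $e_iae_j=0$. The dual hypothesis $e_k\mathcal{R}e_iae_j=\{0\}$ is handled by the symmetric application of primeness with $u=e_iae_j$ placed on the right.

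With conditions (i) and (ii) of Theorem \ref{theorem} in hand, the additivity of both $M$ and $M^*$ follows immediately from that Theorem. I do not expect a genuine obstacle in this argument; the only mildly delicate point is the interpretation of $e_2$ when $\mathcal{R}$ is not unital, in which case $e_2=1-e_1$ must be understood via the Peirce decomposition convention already in force in the paper, so that the invocation of primeness against $u\mathcal{R}e_k=\{0\}$ is still meaningful. This is book-keeping rather than mathematics, and can be absorbed into a single sentence at the end of the proof.
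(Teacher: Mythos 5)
Your proposal is correct and follows exactly the paper's route: the paper likewise disposes of the corollary by observing that primeness yields condition (i) of Theorem \ref{theorem} (condition (ii) being assumed verbatim) and then invoking the theorem. Your write-up merely spells out the primeness argument that the paper leaves as ``easy to check,'' including the minor non-unital bookkeeping for $e_2$.
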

\begin{proof}
Since $\mathcal {R}$ is prime, it is easy to check that condition
(i) of Theorem \ref{theorem} holds true. Now the proof goes
directly.
\end{proof}

In particular, if a prime ring has an identity element, then we have
\begin{corollary}
Let $\mathcal {R}$ be a $2$-torsion free unital prime ring containing a   nontrivial idempotent $e_1$, and $\mathcal
{R}^{\prime }$ be an arbitrary ring.  Suppose that $M\colon
{\mathcal R}\rightarrow {\mathcal R}^{\prime }$ and $M^*\colon
{\mathcal R}^{\prime }\rightarrow {\mathcal R}$ are two surjective
maps such that
\begin{displaymath}
 \left\{ \begin{array}{ll}
 M(aM^*(x)+M^*(x)a)=M(a)x+xM(a),\\
 M^*(M(a)x+xM(a))=aM^*(x)+M^*(x)a
\end{array}\right.
\end{displaymath}
 for all $a\in \mathcal {R}$, $x\in {\mathcal R}^{\prime }$. Then both $M$ and $M^*$ are additive.
\end{corollary}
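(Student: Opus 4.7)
The plan is simply to verify that the hypothesis concerning $e_2 a e_2$ in the previous corollary is automatically satisfied as soon as $\mathcal{R}$ has an identity element, so that this result reduces immediately to the prime-ring corollary.

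More precisely, I would proceed as follows. Suppose $a \in \mathcal{R}$ satisfies $e_2 a e_2 b e_2 + e_2 b e_2 a e_2 = 0$ for every $b \in \mathcal{R}$. Since $\mathcal{R}$ is unital, I am allowed to set $b = 1$, which collapses the relation to $e_2 a e_2 + e_2 a e_2 = 2\, e_2 a e_2 = 0$. Because $\mathcal{R}$ is $2$-torsion free, this forces $e_2 a e_2 = 0$. Hence the implication
\[
\bigl(\,e_2 a e_2 b e_2 + e_2 b e_2 a e_2 = 0 \text{ for all } b \in \mathcal{R}\,\bigr) \Longrightarrow e_2 a e_2 = 0
\]
holds automatically under the present hypotheses.

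With this observation in hand, every assumption of the preceding corollary is met: $\mathcal{R}$ is $2$-torsion free and prime, contains the nontrivial idempotent $e_1$, and the displayed implication is valid. Applying that corollary to the surjective pair $(M, M^*)$ yields the additivity of both $M$ and $M^*$, completing the proof. There is no real obstacle here; the only substantive point is noticing that the presence of an identity collapses the second-condition hypothesis to a $2$-torsion statement, which is free by assumption.
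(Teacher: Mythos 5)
Your proof is correct and is exactly the argument the paper leaves implicit (the paper states this corollary without proof as an immediate specialization of the preceding one): substituting $b=1$ collapses the condition $e_2ae_2be_2+e_2be_2ae_2=0$ to $2e_2ae_2=0$, and $2$-torsion freeness finishes it. Nothing further is needed.
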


We complete this note by considering Jordan elementary maps on
standard operator algebras.

\begin{corollary}
Let $\mathcal {A}$ be a standard operator algebra on a Banach space
of dimension greater than $1$, and $\mathcal {R}$ be an arbitrary ring.
Suppose that $M\colon {\mathcal A}\rightarrow {\mathcal R}$ and
$M^*\colon {\mathcal R}\rightarrow {\mathcal A}$ are surjective maps
such that
\begin{displaymath}
 \left\{ \begin{array}{ll}
 M(aM^*(x)+M^*(x)a)=M(a)x+xM(a),\\
 M^*(M(a)x+xM(a))=aM^*(x)+M^*(x)a
\end{array}\right.
\end{displaymath}
 for all $a\in \mathcal {A}$, $x\in {\mathcal R}$. Then both $M$ and $M^*$ are additive.
\end{corollary}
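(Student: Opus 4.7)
The plan is to deduce this corollary from Theorem~\ref{theorem} applied with $\mathcal{R}:=\mathcal{A}$ (and the ring in the corollary playing the role of $\mathcal{R}^{\prime}$), so the task reduces to verifying the structural hypotheses of that theorem for a standard operator algebra. First I would pick $e_1\in\mathcal{A}$ to be any rank-one projection; such an $e_1$ exists because $\mathcal{A}$ contains all finite rank operators, and it is a nontrivial idempotent thanks to $\dim X>1$, so that $e_2=I-e_1$ is nonzero in $B(X)$ even though $e_2$ itself need not lie in $\mathcal{A}$. The Peirce decomposition $\mathcal{A}=\mathcal{A}_{11}\oplus\mathcal{A}_{12}\oplus\mathcal{A}_{21}\oplus\mathcal{A}_{22}$ is still well-defined, and $\mathcal{A}$ is $2$-torsion free since it is a linear space over $\mathbb{R}$ or $\mathbb{C}$.

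Next I would verify condition (i) by exploiting the primeness of standard operator algebras: if $A\,\mathcal{A}\,B=\{0\}$ with $B\neq 0$, then testing against the rank-one operator $T=y\otimes f\in\mathcal{A}$ gives $(Ay)\otimes(B^{*}f)=0$ for all $y,f$, forcing $A=0$. When $e_k=e_1\in\mathcal{A}$ this applies directly. When $e_k=e_2\notin\mathcal{A}$ I would run the same rank-one test: $(e_iae_j)Te_2=(e_iae_j)y\otimes(f\circ e_2)$; if $e_iae_j\neq 0$, choosing $y_0$ with $(e_iae_j)y_0\neq 0$ forces $f\circ e_2=0$ for every $f\in X^{*}$, contradicting $e_1\neq I$. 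The dual implication involving $e_k\mathcal{A}e_iae_j$ is symmetric.

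The main substance is condition (ii). Setting $s=e_2ae_2$, the hypothesis reads $s(e_2be_2)+(e_2be_2)s=0$ for every $b\in\mathcal{A}$. Since $e_2be_2$ ranges over $e_2\mathcal{A}e_2$, which contains every rank-one projection $p$ on the range of $e_2$ (extended by zero on $e_1X$), I would specialize to such a $p$ and obtain $sp+ps=0$. Right-multiplying by $p$ gives $sp=-psp$ and left-multiplying gives $ps=-psp$, so $sp=ps$; combined with $sp+ps=0$ this forces $2sp=0$, hence $sp=0$ in characteristic different from $2$. Therefore $s$ annihilates every one-dimensional subspace of $e_2X$, which yields $s=0$, and condition (ii) holds. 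The one delicate point throughout is that $e_2$ need not belong to $\mathcal{A}$, which is precisely why these verifications have to be made by testing against rank-one operators inside $\mathcal{A}$ rather than by quoting any abstract ring axiom; once this point is handled, Theorem~\ref{theorem} applies verbatim and delivers the additivity of $M$ and $M^{*}$.
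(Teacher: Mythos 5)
Your proposal is correct, and the overall route is the same as the paper's: both deduce the corollary by applying Theorem~\ref{theorem} to $\mathcal{A}$ with a rank-one idempotent $e_1$. The only difference is in how conditions (i) and (ii) are certified: the paper simply cites Lemma~2 of Lu \cite{lu123} for this, whereas you reprove it from scratch with rank-one operator computations. Your verification is sound, and you correctly flag the one genuinely delicate point, namely that $e_2=I-e_1$ need not lie in $\mathcal{A}$, so all tests must be run against finite-rank elements of $\mathcal{A}$ itself; your checks of condition (i) via $A(y\otimes f)B=(Ay)\otimes(f\circ B)$ and of condition (ii) via rank-one idempotents $p\in e_2\mathcal{A}e_2$ (yielding $sp=ps$, hence $2sp=0$, hence $sp=0$ over a field of characteristic zero) both go through, and $s=e_2ae_2$ vanishes on $e_1X$ automatically, so $sp=0$ for all such $p$ does force $s=0$. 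What your version buys is self-containedness — the corollary no longer rests on an external lemma — at the cost of a page of operator-theoretic bookkeeping that the paper outsources to \cite{lu123}.
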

\begin{proof}
Note that, by Lemma 2 in \cite {lu123}, we see that $\mathcal {A}$ satisfies conditions (i)
and (ii) of Theorem \ref{theorem}.
Now, the proof follows easily.
\end{proof}
\bibliographystyle{amsplain}

\end{document}